\RequirePackage{fix-cm}
\documentclass[smallextended]{svjour3}       
\smartqed  
\usepackage{subcaption}

\usepackage{amsmath,amssymb}
\newcommand{\bsym}[1]{\ensuremath{\boldsymbol{#1}}}
\renewcommand{\vec}[1]{\ensuremath{\bsym{#1}}}
\newcommand{\fnc}[1]{\ensuremath{\mathcal{#1}}}
\newcommand{\mat}[1]{\ensuremath{\mathsf{#1}}}
\newcommand{\norm}[1]{\left\lVert #1 \right\rVert}

\newcommand{\der}[2][]{\dfrac{\mathrm{d} #1}{\mathrm{d} #2}}
\usepackage{mathtools}  
\newcommand{\diag}[1]{\operatorname{diag}\left(#1\right)}

\usepackage{xcolor}
\usepackage{enumitem}
\usepackage{hyperref} 
\hypersetup{
    colorlinks,
    linkcolor={black},
    citecolor={black},
    urlcolor={blue!80!black}
}

\begin{document}

\title{Construction and Optimization of Summation-by-Parts Operators for General Function Spaces Using an Improved Generalized Gaussian Quadrature Algorithm}

\titlerunning{Construction and Optimization of FSBP Operators Using Gaussian Quadrature} 

\author{Alex Bercik         \and
        Lisa Patrascu \and
        David W. Zingg 
}


\institute{A.\ Bercik \at
              University of Toronto Institute for Aerospace Studies, Toronto, Canada \\
              \email{alex.bercik@mail.utoronto.ca}
            \and
              L.\ Patrascu \at
                 University of Toronto Institute for Aerospace Studies, Toronto, Canada \\
                 \email{lisa.patrascu@mail.utoronto.ca}
            \and
           D.\ W.\ Zingg \at
              University of Toronto Institute for Aerospace Studies, Toronto, Canada \\
              \email{david.zingg@utoronto.ca}
}

\date{Received: date / Accepted: date}

\maketitle

\begin{abstract}

We construct optimized summation-by-parts (SBP) operators for general function spaces with provably minimal degrees of freedom on open, closed, and half-open nodal distributions. These operators rely on generalized Gaussian quadrature rules, for which we present an improved algorithm that is flexible, efficient, and provably convergent. In cases where free parameters are available, we further introduce two operator optimization strategies. We test our operators on a handful of numerical examples that contain large or unbounded gradients, in which some a priori knowledge of the solution has been assumed to select an appropriate basis. The novel operators are found to outperform standard polynomial operators by several orders of magnitude in solution accuracy relative to degrees of freedom. Furthermore, our novel operators significantly outperform function-space SBP operators with equispaced nodal distributions, which require significantly more nodes for the same operator basis. Finally, we demonstrate that the operator optimization procedures are critical to achieving accurate and efficient discretizations, as the standard SBP construction procedure can lead to nullspace-inconsistent and poorly-conditioned operators.

\keywords{Summation-by-parts \and High-order
methods \and Generalized Gauss Quadrature \and General function spaces}
\subclass{65D25 \and 65D32 \and 65M06 \and 65M60 \and 65M70}
\end{abstract}

\section{Introduction}\label{sec:introduction}

Summation-by-parts (SBP) operators, together with simultaneous approximation terms (SATs) for enforcing boundary conditions, allow for the construction of provably robust high-order discretizations of partial differential equations. By satisfying integration by parts at the semidiscrete level, they provide an algebraic framework for reproducing stability and conservation proofs of the continuous problem~\cite{Kreiss1974,strand1994,carpenter1999,DelReyFernandez2014_review,svard_review}. As with any numerical method that achieves high-order through polynomial approximation, however, accuracy can suffer when applied to practical problems of interest involving singularities, high gradients, or other features requiring high resolution, such as cracks in solid mechanics~\cite{Byskov1970,Fix1973,Benzley1974}, or boundary layers and singularities in fluid mechanics~\cite{yano_airfoil,krank_wall_2018,brill_enriched_2020,zhang_exponential_2020,Fidkowski_basis_2025}.

The use of nonpolynomial basis functions to improve the accuracy of a numerical method without remeshing has been studied in the context of the finite element method as early as the 1970s~\cite{Byskov1970,Fix1973,Benzley1974}. Several closely related methods have since emerged, including the Extended/Generalized Finite Element Methods (XFEM/GFEM) \cite{XFEM_1,XFEM_2,fries_extendedgeneralized_2010}, the Discontinuous Enrichment Method (DEM) \cite{farhat_discontinuous_2001}, and the Discontinuous Galerkin (DG) method with non-polynomial approximation spaces~\cite{yuan_discontinuous_2006}. The XFEM/GFEM can be classified under the broader definition of a partition of unity method~\cite{Babuska_PUM_1994,Melenk_PUM_1996,Babuska_PUM_1997}, where the basis is augmented by multiplications of nonpolynomial `enrichment functions' with the partition of unity basis shape functions. This allows discontinuities and singularities to be more effectively represented while maintaining the compact support of the local basis functions. For a comprehensive overview of these methods, we refer to~\cite{fries_extendedgeneralized_2010}. In contrast, the DEM (and variants thereof, e.g. \cite{borker_discontinuous_2017}) directly augments the basis with nonpolynomial functions and enforces inter-element continuity weakly via Lagrange multipliers. While both methods allow for great flexibility in terms of basis enrichment, it is not yet currently understood whether these can be incorporated into an SBP framework. Challenges include the treatment of element interfaces and the exact integration of the enriched basis. On the other hand, standard DG methods (which are interpretable as SBP methods under the choice of specific quadratures~\cite{gassner2013}) can be directly implemented with certain nonpolynomial bases and retain design-order accuracy~\cite{yuan_discontinuous_2006,Yang_dg_2016}. The theory for analogously using nonpolynomial basis functions within the broader SBP framework was developed in~\cite{glaubitz_fsbp2023}, with later extensions to multidimensional operators in~\cite{glaubitz_multidimensional_2023} and the second derivative in~\cite{glaubitz_second_2024}. In these works, the quadrature rules are derived directly from optimization procedures (e.g.~\cite{glaubitz_quadrature_2023,glaubitz_multidimensional_2023,Glaubitz_optimization_2025}).

Until recently, these general function-space SBP (FSBP) operators typically employed more degrees of freedom (DOF) than necessary due to their use of uniformly-spaced nodes. Therefore, although the operators were shown to exhibit improved performance over traditional polynomial-based SBP operators for certain problems, their efficiency is likely not optimal. Recently, Hale et al.~\cite{hale_sbp_2026} constructed FSBP operators with provably minimal DOF by employing generalized Gaussian quadratures, labelling them `optimal'. While the use of Gaussian quadratures to construct SBP operators with minimal DOF is well-known for polynomial-based operators~\cite{DDRF_generalized_2014}, applying the same theory to general function spaces introduces challenges. For example, while polynomial Gauss quadrature can easily be obtained through the roots of orthogonal polynomials, there is no comparable characterization for general systems. At best, one can rely on the rich theory of Chebyshev systems to characterize the existence of quadrature rules (e.g. see~\cite{KarlinStudden_1966}), but to actually find the rules one must still employ nonlinear algorithms that are not always guaranteed to converge (e.g.~\cite{ma_generalized_1994,yarvin_generalized_1998,bremer_nonlinear_2010}).

In this work we improve the generalized Gaussian quadrature algorithm of Huybrechs~\cite{Huybrechs_computation_2022} to reliably and efficiently construct quadrature rules suitable for FSBP operators with minimal DOF. This algorithm has the particular advantages that it is guaranteed to converge, and is remarkably flexible, suitable for finding quadrature rules with closed, open, and half-open nodal distributions with arbitrary positive measure. This enables the construction of FSBP operators on general nodal distributions, including open and half-open operators that are suitable for the simulation of problems with endpoint singularities. For cases where the quadrature rule results in more nodes than are needed to represent the basis functions exactly, we also introduce two methodologies to exploit these additional DOF and obtain optimized FSBP operators. In \S \ref{sec:fsbp-operators} we briefly review the theory of FSBP operators, then similarly review the theory of generalized Gaussian quadratures in \S \ref{sec:gen_gauss}. In \S \ref{sec:gen_gauss_algo} we describe the improved generalized Gauss algorithm employed in this work, and in \S \ref{sec:optimization} we introduce the two FSBP operator optimization strategies. Finally, in \S \ref{sec:results} we verify our novel optimized operators on various simple test problems, including one with an endpoint singularity.

\section{FSBP operators} \label{sec:fsbp-operators}

For a comprehensive introduction to FSBP operators, we refer the reader to \cite{glaubitz_fsbp2023}. Here we briefly review the key concepts. We begin by generalizing Definition 2.3 of \cite{glaubitz_fsbp2023} to allow for FSBP operators with arbitrary nodal distributions and possibly endpoint singularities, analogous to the generalization introduced in \cite{DDRF_generalized_2014} for classical polynomial-based SBP operators.
\begin{definition} \label{defn:fsbp-operator}
  Let $\fnc{F} \subset C^0([x_L,x_R]) \cap C^1((x_L,x_R))$ be a finite-dimensional function space. An operator $\mat{D} = \mat{H}^{-1} \mat{Q} \in \mathbb{R}^{N \times N}$ defined on a nodal distribution $\vec{x} \coloneqq \left\{ x_i \mid x_L \leq x_1 < \ldots < x_N \leq x_R \right\}$ is an $\fnc{F}$-based SBP operator if
  \begin{enumerate}[label=\roman*)]
    \item $\mat{D} f(\vec{x}) = f'(\vec{x})$ for all $f \in \fnc{F}$,
    \item $\mat{H}$ is a symmetric positive definite matrix, and
    \item $\mat{Q} + \mat{Q}^\top = \vec{t}_R \vec{t}_R^\top - \vec{t}_L \vec{t}_L^\top$, where $\vec{t}_R^\top f(\vec{x}) = f(x_R)$ and $\vec{t}_L^\top f(\vec{x}) = f(x_L)$ for all $f \in \fnc{F}$.
  \end{enumerate}
\end{definition}
We denote the boundary integration matrix by $\mat{E} \coloneqq \vec{t}_R \vec{t}_R^\top - \vec{t}_L \vec{t}_L^\top$, where if the endpoints are included in the nodal distribution, i.e.\,$x_1 = x_L$ and $x_N = x_R$, we can set the extrapolation operators as unit vectors, i.e.\,$\vec{t}_R = \vec{e}_N$ and $\vec{t}_L = \vec{e}_1$, such that we recover the definition found in \cite{glaubitz_fsbp2023} with $\mat{E} = \diag{-1, 0,  \ldots, 1}$. In this work we exclusively consider a diagonal matrix $\mat{H}$, such that its entries correspond to a positive quadrature rule, as defined below.

\begin{definition}[Definition 4.1 of \cite{glaubitz_fsbp2023}]
  An $N$-point quadrature rule $(\vec{x}, \vec{\omega})$ of the form $\sum_{i=1}^{N} \omega_i f(x_i)$ is positive if $\omega_i > 0$ for all $i = 1, \ldots, N$, and $\fnc{G}$-exact if it integrates all functions in the function space $\fnc{G}$ exactly, i.e.
  \begin{equation}
    \sum_{i=1}^{N} \omega_i g(x_i)  = \int_{x_L}^{x_R} g(x) \, \mathrm{d}x
    \quad \text{for all } g \in \fnc{G}.
  \end{equation}
\end{definition}
The existence of a positive quadrature rule that is exact on the function space $(\fnc{F} \fnc{F})' \coloneqq \left\{ f'g + g' f \mid f, g \in \fnc{F} \right\}$ guarantees the existence of a diagonal-$\mat{H}$ FSBP operator for the space $\fnc{F}$, as is made clear in the following Theorem.

\begin{theorem}[Corollary 4.6 of \cite{glaubitz_fsbp2023}] \label{thm:existence}
  Let $\fnc{F} \subset C^1([x_L,x_R])$ be a finite-dimensional function space with basis $\{ f_i \}_{i=1}^K$, and assume that the generalized Vandermonde matrix $\mat{V}_{ij} \coloneqq f_j (x_i)$ has linearly independent columns. Then there exists an $\fnc{F}$-based SBP operator $\mat{D}$ with a positive diagonal $\mat{H}$ if and only if there exists a positive and $(\fnc{F} \fnc{F})'$-exact quadrature rule defined on the same nodal distribution $\vec{x}$.
\end{theorem}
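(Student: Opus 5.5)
We prove the two directions separately, working in the basis $\{f_i\}_{i=1}^K$ with Vandermonde matrix $\mat{V}$ and derivative matrix $\mat{V}_x$, where $(\mat{V}_x)_{ij} \coloneqq f_j'(x_i)$. Exactness on $\fnc{F}$ (condition i) is then the matrix equation $\mat{D}\mat{V} = \mat{V}_x$, and since $\mat{H}$ is invertible this is equivalent to $\mat{Q}\mat{V} = \mat{H}\mat{V}_x$. Throughout, let $\mat{H} = \diag{\vec{\omega}}$ be positive diagonal, and write $\vec{f}_i \coloneqq f_i(\vec{x})$.

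\emph{Necessity.} Assume $\mat{D} = \mat{H}^{-1}\mat{Q}$ is an $\fnc{F}$-based SBP operator. For $f,g\in\fnc{F}$, condition iii) gives
\begin{equation*}
f(\vec{x})^\top(\mat{Q}+\mat{Q}^\top)g(\vec{x}) = f(x_R)g(x_R) - f(x_L)g(x_L).
\end{equation*}
Substituting $\mat{Q}g(\vec{x}) = \mat{H}g'(\vec{x})$ and $\mat{Q}f(\vec{x}) = \mat{H}f'(\vec{x})$ turns the left side into
\begin{equation*}
f(\vec{x})^\top\mat{H}g'(\vec{x}) + f'(\vec{x})^\top\mat{H}g(\vec{x}) = \sum_i \omega_i\,(fg'+f'g)(x_i).
\end{equation*}
The right side equals $\int_{x_L}^{x_R}(fg)'\,\mathrm{d}x$ by the fundamental theorem of calculus. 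So $(\vec{x},\vec{\omega})$ is positive and exact on $(\fnc{F}\fnc{F})'$, and this direction is immediate.

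\emph{Sufficiency.} Assume $(\vec{x},\vec{\omega})$ is positive and $(\fnc{F}\fnc{F})'$-exact, and set $\mat{H}=\diag{\vec{\omega}}$. We look for $\mat{Q} = \mat{S} + \tfrac12\mat{E}$ with $\mat{S}$ skew-symmetric, which satisfies iii) automatically. The remaining condition i) becomes the linear equation
\begin{equation*}
\mat{S}\mat{V} = \mat{H}\mat{V}_x - \tfrac12\mat{E}\mat{V} \eqqcolon \mat{W}.
\end{equation*}
To build $\mat{E}$ we need $\vec{t}_L,\vec{t}_R$ with $\vec{t}^\top\mat{V} = (f_1(x_{L/R}),\dots,f_K(x_{L/R}))$. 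These exist because $\mat{V}$ has full column rank, e.g.\ $\vec{t}^\top = \vec{b}^\top(\mat{V}^\top\mat{V})^{-1}\mat{V}^\top$, where $\vec{b}$ is the corresponding row of boundary values.

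The main step is solving for a skew-symmetric $\mat{S}$ with $\mat{S}\mat{V}=\mat{W}$. Necessarily $\mat{V}^\top\mat{S}\mat{V}$ is skew, so we need $\mat{V}^\top\mat{W}+\mat{W}^\top\mat{V}=0$. Entrywise this condition reads
\begin{equation*}
\vec{f}_i^\top\mat{H}\vec{f}_j' + \vec{f}_i'^\top\mat{H}\vec{f}_j - \vec{f}_i^\top\mat{E}\vec{f}_j = 0,
\end{equation*}
which is exactly quadrature exactness on $(f_if_j)'$ combined with $\vec{f}_i^\top\mat{E}\vec{f}_j = f_if_j\big|_{x_L}^{x_R}$. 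We then show the condition is also sufficient, using full column rank. Take the thin QR factorization $\mat{V} = \mat{U}\mat{R}$ with $\mat{R}$ invertible, and extend $\mat{U}$ to an orthogonal matrix $[\mat{U}\ \mat{U}_\perp]$. Put $\mat{A}\coloneqq\mat{W}\mat{R}^{-1}$, for which $\mat{U}^\top\mat{A}$ is skew. The explicit choice
\begin{equation*}
\mat{S} = \mat{A}\mat{U}^\top - \mat{U}\mat{A}^\top + \mat{U}(\mat{A}^\top\mat{U})\mat{U}^\top
\end{equation*}
is skew-symmetric, and $\mat{S}\mat{U} = \mat{A} - \mat{U}\mat{A}^\top\mat{U} + \mat{U}\mat{A}^\top\mat{U} = \mat{A}$. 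Hence $\mat{S}\mat{V}=\mat{W}$. Verifying this construction is the only nontrivial algebra; it is the standard argument for polynomial SBP operators with the monomial basis replaced by $\{f_i\}$. The resulting $\mat{D}=\mat{H}^{-1}(\mat{S}+\tfrac12\mat{E})$ satisfies conditions i)--iii), which completes the equivalence.
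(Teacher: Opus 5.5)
Your proof is correct. The paper gives no proof of its own beyond citing Corollary~4.6, but your argument is the standard one behind that result and matches the construction the paper uses in \S\ref{sec:std_construction}--\S\ref{sec:sequential}: necessity comes from applying (i) and (iii) to pairs of basis functions, and sufficiency comes from $\mat{Q}=\mat{S}+\tfrac12\mat{E}$, with solvability guaranteed by $\mat{V}^\top\mat{H}\mat{V}'+(\mat{V}')^\top\mat{H}\mat{V}=\mat{V}^\top\mat{E}\mat{V}$. Your explicit $\mat{S}$ is the particular solution of the paper's block splitting against $[\mat{V},\mat{Z}]$, with the free skew block $\mat{U}_\perp^\top\mat{S}\mat{U}_\perp$ set to zero.

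A small advantage of your version is that you build $\vec{t}_L,\vec{t}_R$ through the pseudoinverse. This covers the generalized Definition~\ref{defn:fsbp-operator} with open and half-open nodes, whereas the cited result was formulated for the original definition with boundary nodes and $\mat{E}=\diag{-1,0,\ldots,0,1}$.
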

Given a general function space $\fnc{F}$, Theorem~\ref{thm:existence} allows the construction of an $\fnc{F}$-based SBP operator to be reduced to a two-step procedure in which one first finds a positive and $(\fnc{F} \fnc{F})'$-exact quadrature formula and then constructs the SBP operator by enforcing the accuracy and SBP conditions of Definition~\ref{defn:fsbp-operator} (i and iii, respectively). This two-step procedure is commonplace in both the polynomial and general function space SBP literature \cite{DDRF_generalized_2014,Marchildon_optimization_2020,glaubitz_fsbp2023,Worku_quadrature_2024}, though it is also possible to construct the quadrature rule and operator simultaneously \cite{Mattsson_optimal_2014,Glaubitz_optimization_2025}.

\section{Generalized Gaussian Quadrature} \label{sec:gen_gauss}

Polynomial-based diagonal-$\mat{H}$ SBP operators with the minimal possible number of nodes typically employ Gaussian quadrature~\cite{DDRF_generalized_2014}; either Legendre-Gauss (LG) with an open nodal distribution, i.e.\,no endpoints, Legendre-Gauss-Lobatto (LGL) with a closed nodal distribution, i.e.\,endpoints included, or Gauss-Radau (GR) with a half-open nodal distribution, i.e.\,only one of either the right or left endpoint included. For non-polynomial function spaces, we can similarly employ \emph{generalized} Gaussian quadrature, for which the natural generalization of a set of orthogonal polynomials is a Chebyshev system (or T-system\footnote{We say T-system due to the alternative spelling (e.g. used in \cite{KarlinStudden_1966}), Tchebysheff system.})~\cite{KarlinStudden_1966}, which we define below.

\begin{definition}[Definition 1.1 of {\cite[\S1.1]{KarlinStudden_1966}}] \label{defn:t-system}
  A set $\left\{ f_i \right\}_{i=1}^K$ is a T-system on $[x_L,x_R]$ if $f_j \in C^0([x_L,x_R])$ and the determinant of the generalized Vandermonde matrix $\mat{V}_{ij} \coloneqq f_j (x_i)$ is strictly positive for any nodal distribution $\vec{x} \coloneqq \left\{ x_i \mid x_L \leq x_1 < \ldots < x_K \leq x_R \right\}$. It is a complete Chebyshev system (or CT-system) if every subset $\left\{ f_i \right\}_{i=1}^n$ for $n < K$ is also a T-system.
\end{definition}

For an excellent review of T-systems in relation to generalized Gaussian quadrature, we refer the reader to \cite{Huybrechs_computation_2022}. For a more comprehensive treatment, we instead refer to the foundational work of Karlin and Studden~\cite{KarlinStudden_1966}. Here we summarize only the relevant results. We first define the moment vector $\vec{c} \in \mathbb{R}^K$, where
\begin{equation} \label{eq:moment_vec}
  c_i = \int_{x_L}^{x_R} f_i(x) \, \mathrm{d} \mu(x)
\end{equation}
for some positive measure $\mu$ on $[x_L,x_R]$. Crucially, for a given set of functions $\left\{ f_i \right\}_{i=1}^K$, different measures can result in the same moment vector $\vec{c}$. For example, for the set of degree $2n$ polynomials on $[-1,1]$, the uniform continuous measure $\mathrm{d} \mu(x) = \mathrm{d} x$ produces the same moment vector as the discrete measure $\mathrm{d} \mu(x) = \sum_{i=1}^{n} \omega_i \delta(x - x_i)$, where $(x_i, \omega_i)$ correspond to the $n$ LG quadrature nodes and weights, and $\delta$ is the Dirac delta function. More generally, the well-known Gauss-Jacobi quadratures produce the same moment vector as $\mathrm{d} \mu(x) = (1-x)^\alpha (1+x)^\beta \mathrm{d} x$ for any $\alpha, \beta > -1$. Therefore, given some function set and continuous measure $\mathrm{d} \mu(x) = \omega(x) \mathrm{d} x$ with $\omega(x) > 0$, we seek a \emph{convex representation} of the moment vector $\vec{c}$, i.e.
\begin{equation} \label{eq:convex_representation}
  \vec{c} = \sum_{i=1}^{n} \omega_i \vec{f}(x_i) , \quad \vec{f} \coloneqq \left( f_1, \ldots, f_K \right)^\top.
\end{equation}
for some nodes $x_i \in [x_L,x_R]$ and weights $\omega_i \geq 0$. When a convex representation contains the minimal possible number of $n$ nodes (under appropriate endpoint conditions), it will be our desired quadrature rule. We characterize the existence of minimal-node convex representations in the following theorem.

\begin{theorem}
  \label{thm:generalized_gaussian_quadrature}
  Let $\{f_i\}_{i=1}^K$ be a T-system on $[x_L,x_R]$, and let
  \[
    \mathrm{d}\mu(x) = \omega(x)\,\mathrm{d}x,
    \qquad
    \omega\in C^0([x_L,x_R]),
    \qquad
    \omega(x)>0
    \quad \text{for all } x\in [x_L,x_R].
  \]
  Then the moment vector $\vec{c}$ defined in \eqref{eq:moment_vec} admits the following positive convex representations:
\begin{itemize}
\item If $K$ is even, there exists a unique convex representation \eqref{eq:convex_representation} with $n=K/2$ strictly interior nodes,
\[
(\vec{x}, \vec{\omega}) : x_L < x_1 < \cdots < x_{K/2} < x_R, \quad \omega_i > 0.
\]
This representation has the smallest possible number of nodes among all positive convex representations of $\vec{c}$.
\item If $K$ is even, there exists a unique convex representation \eqref{eq:convex_representation} with $n=K/2+1$ nodes including both endpoints,
\[
(\vec{x}, \vec{\omega}) : x_L = x_1 < \cdots < x_{K/2+1} = x_R, \quad \omega_i > 0.
\]
This representation has the smallest possible number of nodes among all positive convex representations of $\vec{c}$ that include both endpoints.
\item If $K$ is odd, there exists a unique convex representation \eqref{eq:convex_representation} with $n=(K+1)/2$ nodes including either the left endpoint but not the right endpoint, or the right endpoint but not the left endpoint,
\[
(\vec{x}, \vec{\omega}) : 
\begin{cases}
x_L = x_1 < \cdots < x_{(K+1)/2} < x_R, \quad \\
x_L < x_1 < \cdots < x_{(K+1)/2} = x_R,
\end{cases}
\quad \omega_i > 0.
\]
These representations have the smallest possible number of nodes among all positive convex representations of $\vec{c}$ that include one endpoint.
\end{itemize}  
\end{theorem}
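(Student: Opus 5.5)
This is the classical Markov–Krein theory of principal and upper/lower representations in the moment space of a T-system, as in Karlin--Studden. The plan is to reduce it to results there rather than reprove everything.

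\textbf{Step 1: the moment vector is interior.} Let $\mathcal{M}$ be the moment space, i.e.\ the closed convex cone generated by $\{\vec{f}(x) : x\in[x_L,x_R]\}$. Because $\omega$ is continuous and strictly positive, the measure $\mu$ has infinite support. I will show that $\vec{c}$ lies in the interior of $\mathcal{M}$. Suppose instead that $\vec{c}$ were on the boundary. Then a supporting hyperplane would give a nontrivial $u=\sum_j a_j f_j$ with $u\ge 0$ on $[x_L,x_R]$ and $\int u\,\mathrm{d}\mu = 0$. This forces $u\equiv 0$ on $[x_L,x_R]$, since $u$ is continuous and $\omega>0$. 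But a nontrivial element of a T-system span has at most $K-1$ zeros, because otherwise the Vandermonde determinant at $K$ zeros would vanish. This is a contradiction.

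\textbf{Step 2: counting and minimality.} Assign index $1$ to an interior node and index $1/2$ to an endpoint node. The key lemma (Karlin--Studden Thm.~II.2.1 and its corollaries) is that a point $\vec{c}$ in the moment space is a boundary point if and only if it admits a representation of index $<K/2$. Consequently every representation of an interior $\vec{c}$ has index $\ge K/2$.

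I will prove the relevant direction with a short Vandermonde argument. Suppose a representation has index $<K/2$. Use the nonnegative polynomial construction of Karlin--Studden: a nonnegative $u$ in the span that vanishes exactly at the nodes, with double zeros at interior nodes. Such a $u$ exists when the total zero count $2\cdot\#\text{interior}+\#\text{endpoint}\le K-1$. It then gives $\int u\,\mathrm{d}\mu=\sum_i\omega_i u(x_i)=0$, contradicting Step 1.

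The minimality claims follow from this bound:
\begin{itemize}
\item For $K$ even with interior nodes only, index $\ge K/2$ forces $n\ge K/2$.
\item For $K$ even with both endpoints included, $n-2+1\ge K/2$ forces $n\ge K/2+1$.
\item For $K$ odd with one endpoint included, $(n-1)+1/2\ge K/2$ forces $n\ge (K+1)/2$.
\end{itemize}
In each case the claimed rule has index exactly $K/2$; it is a \emph{principal representation}.

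\textbf{Step 3: existence and uniqueness.} Karlin--Studden Thm.~II.3.1 states that every interior point of the moment space of a T-system has exactly two principal representations: a lower one, which omits $x_R$, and an upper one, which includes $x_R$.
\begin{itemize}
\item For $K$ even, the lower principal representation has index $K/2$ without $x_R$. Since the index is an integer, it has no endpoints at all, so it is the interior Gauss rule. The upper one contains $x_R$ and hence also $x_L$, which gives the Lobatto rule.
\item For $K$ odd, the upper representation contains $x_R$ but not $x_L$, and the lower one contains $x_L$ but not $x_R$; these are the two Radau rules.
\end{itemize}
Positivity of the weights holds because a zero weight would drop the node and lower the index below $K/2$, contradicting Step 2.

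Uniqueness within each endpoint class comes from the same theorem. Directly: the difference of two distinct index-$K/2$ representations with the same endpoint pattern is a signed combination supported on at most $K$ points. Its Vandermonde matrix has full column rank, so the difference must vanish.

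\textbf{Main obstacle.} The delicate step is existence, namely constructing the principal representations. Karlin--Studden do this through a continuity/degree argument, or by continuing along the boundary of the moment cone. If I had to argue it directly, I would first take the ray from $\vec{f}(x_R)$ through $\vec{c}$ to the boundary point $\vec{c}-t\vec{f}(x_R)$. I would then use the induction in $K$ from the boundary structure: boundary points have unique representations of index $<K/2$. I expect this step to require citing Karlin--Studden rather than reproving it.
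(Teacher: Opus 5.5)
Your proposal is correct and follows the same route as the paper: show that $\vec{c}$ is an interior point of the moment cone (the paper via uniqueness of the discrete representation of boundary points, you via a supporting hyperplane and $\omega>0$), then invoke Karlin--Studden's existence of the upper and lower principal representations and translate the index $K/2$ into the stated node counts. Your parity translation, positivity-of-weights remark and Vandermonde uniqueness argument spell out what the paper only cites. For the first bullet, minimality among \emph{all} representations (not only interior-node ones) follows at once, since a representation with $n$ nodes has index at most $n$.
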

\begin{proof}
  Let $\mathcal{M}_K \coloneqq \left\{ \vec{c} = \int_{x_L}^{x_R} \vec{f}(x) \, \mathrm{d} \sigma(x) \in \mathbb{R}^K \mid \forall \ \text{positive measures } \mathrm{d} \sigma(x) \right\}$ be the moment space associated with the T-system $\{f_i\}_{i=1}^K$. By Theorem 2.1 of {\cite[\S2.2]{KarlinStudden_1966}}, boundary points of $\mathcal{M}_K$ can only be (uniquely) represented in the convex form \eqref{eq:convex_representation}. Since the measure $\mathrm{d} \mu(x) = \omega(x) \, \mathrm{d} x$ is continuous---thus not of the form \eqref{eq:convex_representation}---$\vec{c}$ is an interior point of $\mathcal{M}_K$. The desired result then follows directly from Corollary 3.1 of {\cite[\S2.3]{KarlinStudden_1966}} after translating their notation of representation indices to the stated node counts in the present work.
\end{proof}

For polynomial bases, the first case in Theorem \ref{thm:generalized_gaussian_quadrature} corresponds to the LG quadrature rule, the second to the LGL rule, and the third to the GR rules. The key to constructing SBP operators on general function spaces with minimal DOF is to therefore ensure that the quadrature function space $(\fnc{F} \fnc{F})'$ forms a T-system. In Appendix \ref{app:chebyshev_tests}, we describe some useful tests (both necessary and sufficient diagnostics) one may employ to check for the T-system property, for which it is necessary to introduce the following definition.

\begin{definition}[Definition 2.4 of {\cite[\S1.2]{KarlinStudden_1966}}] \label{defn:et-system}
  A set $\left\{ f_i \right\}_{i=1}^K$ is an extended Chebyshev system (or ET-system) on $[x_L,x_R]$ if $f_j \in C^{K-1}([x_L,x_R])$ and the determinant of the confluent generalized Vandermonde matrix $\tilde{\mat{V}}_{ij} \coloneqq f_j^{(k)} (x_i)$, where $k$ ranges over the multiplicity of $x_i$, is strictly positive for any (possibly coincident) nodal distribution $\vec{x} \coloneqq \left\{ x_i \mid x_L \leq x_1 \leq \ldots \leq x_K \leq x_R \right\}$. Note that $\tilde{\mat{V}}$ is obtained by replacing the rows of the generalized Vandermonde matrix~$\mat{V}$ corresponding to coincident nodes by successive derivatives of the basis functions $f_j$. In the limiting case where all nodes are coincident, the determinant reduces to the Wronskian. The set is an extended complete Chebyshev system (ECT-system) if every subset $\left\{ f_i \right\}_{i=1}^j$ for $j < K$ is also an ET-system.
\end{definition}

\section{An Improved Algorithm for Finding Generalized Gaussian Quadratures} \label{sec:gen_gauss_algo}

The first numerical method to find generalized Gaussian quadratures was described by Ma, Rokhlin, and Wandzura in~\cite{ma_generalized_1994}. It solves the nonlinear moment equations using a Newton iteration algorithm with initial guesses obtained by a continuation from a simpler known system, such as the polynomial basis and its Gaussian quadrature. A significant disadvantage of this method is that it requires the function space to form an extended Hermite system along the entire continuation path, a strong assumption that can fail for problems of interest~\cite{yarvin_generalized_1998}. This limitation was addressed by Yarvin and Rokhlin in~\cite{yarvin_generalized_1998}, who reformulated the continuation in the integration measure rather than the function space itself. The improved algorithm was again modified by Hale et al. in~\cite{hale_sbp_2026} to find quadratures with closed nodal distributions. Its input is an even number of $K$ quadrature basis functions, with the restriction that they must form an ET-system. Integrals and derivatives of the basis functions are accurately approximated using piecewise polynomial interpolants via the Chebfun package.

In the present work, we instead employ the approach of Huybrechs in~\cite{Huybrechs_computation_2022}. Unlike the preceding Newton/optimization-based methods, this algorithm directly parametrizes the moment-space, iteratively adding basis functions and quadrature nodes to explore consecutively higher-dimensional moment spaces. By continuously deforming the convex representation \eqref{eq:convex_representation} through ($\vec{x}$, $\vec{\omega}$), a continuous path towards the desired moment vector \eqref{eq:moment_vec} is constructed. Each step along this path reduces to the solution of a nonlinear system, but with an arbitrarily close initial guess from the previous step, so the algorithm is guaranteed to converge. This approach has the further advantage of requiring no smoothness on the quadrature basis functions beyond continuity on the open interval. It does, however, require the quadrature basis to form a CT-system. Of particular interest for this work is that this method can naturally produce quadratures with both closed and open nodal distributions for an even number of basis functions, as well as quadratures with half-open nodal distributions for an odd number of basis functions. This will be important for constructing FSBP operators with endpoint singularities.

One of the contributions of this work is the modification of the Gaussian quadrature algorithm of Huybrechs~\cite{Huybrechs_computation_2022}, which we provide as a publicly available package at \url{https://github.com/alexbercik/GeneralizedGauss.jl}. The underlying theory requires substantial background on principal and canonical representations of moment spaces. Consequently, further discussion on the algorithm, as well as our modifications, is deferred to Appendix~\ref{app:gauss_algo}. This package extends the functionality of the original implementation considerably. Whereas the publicly available code accompanying~\cite{Huybrechs_computation_2022} was restricted to open quadrature rules associated with an even number of basis functions, the present package supports the full range of principal representations described by the theory. In particular, it supports both even and odd-dimensional function spaces, open, closed, and half-open quadrature rules, non-differentiable basis functions, arbitrary-precision arithmetic, arbitrary positive measures, endpoint singularities at either boundary, and automatic orthogonalization of basis functions. In addition, several implementation modifications yield significant improvements in computational efficiency. The resulting package provides a practical and flexible tool for constructing generalized Gaussian quadrature rules for CT-systems.

\begin{remark}
  The above discussion on existing algorithms for computing generalized Gaussian quadratures is not exhaustive. For example, Bremer et al.~\cite{bremer_nonlinear_2010} proposed a nonlinear optimization procedure that applies to any square-integrable function space, removing the requirement that the function space form a CT-system. However, it lacks the guaranteed convergence of the algorithm presented above, and may return only approximate quadrature rules for a given function space if no suitable rule is found. Since the SBP property requires quadrature exactness, we do not consider this method further.
\end{remark}

\section{The Construction and Optimization of FSBP operators} \label{sec:optimization}

\subsection{The Standard Construction Procedure} \label{sec:std_construction}
Once a nodal distribution and the quadrature weights are determined, the typical construction of an FSBP operator (e.g. \cite{glaubitz_fsbp2023,hale_sbp_2026}) follows \cite{hicken_multidimensional_2016}. Consider a function space $\fnc{F} \coloneqq \operatorname*{span}{\left\{ f_j \right\}_{j=1}^d}$ and a quadrature rule $\left( \vec{x} , \vec{\omega} \right)$ with $N$ nodes. We begin by constructing the generalized Vandermonde matrix and its derivatives,
\[
\mat{V}_{ij} \coloneqq f_j (x_i) \, , \qquad 
\mat{V}'_{ij} \coloneqq f_j' (x_i) \, , \qquad 
\mat{V}, \mat{V}' \in \mathbb{R}^{N \times d} \, .
\]
$\mat{V}$ has full column rank since $\left\{ f_j \right\}$ is a T-system. We also define the generalized boundary Vandermonde vectors,
\[
\left( v_L \right)_j \coloneqq f_j(x_L) \, , \qquad 
\left( v_R \right)_j \coloneqq f_j(x_R) \, , \qquad
\vec{v}_L , \vec{v}_R \in \mathbb{R}^d \, .
\]

The standard construction procedure is as follows. First the extrapolation operators are computed to satisfy 
\begin{equation} \label{eq:tL_tR_accuracy}
\mat{V}^\top \vec{t}_L = \vec{v}_L \, , \qquad 
\mat{V}^\top \vec{t}_R = \vec{v}_R \, .
\end{equation}
In \cite{glaubitz_fsbp2023,hale_sbp_2026}, the endpoints are included so that the natural choice is $\vec{t}_L = \vec{e}_1$ and $\vec{t}_R = \vec{e}_N$. To handle cases where $N>d$, i.e.\,the extrapolation operators are not unique, for nodal distributions not containing the endpoint we find $\vec{t}_L$ by solving the minimum norm constrained problem
\begin{equation} \label{eq:tL_min_norm}
  \vec{t}_L \coloneqq  \operatorname*{argmin}_{\vec{t} \in \mathbb{R}^{N}} \norm{\vec{t}}^2_\star \quad
  \text{subject to} \quad
  \mat{V}^\top \vec{t}_L = \vec{v}_L \, ,
\end{equation}
and similarly for $\vec{t}_R$, where $\star$ indicates some weighted norm defined by $\norm{\vec{t}}^2_\star \coloneqq \vec{t}^\top \mat{\Theta}_\star \vec{t}$ for some positive definite $\mat{\Theta}_\star \in \mathbb{R}^{N \times N}$. Note that the formal solution to \eqref{eq:tL_min_norm} is given by the Moore-Penrose pseudoinverse,
\[
  \vec{t}_L = \mat{\Theta}_\star^{-1} \mat{V} \left( \mat{V}^\top \mat{\Theta}_\star^{-1} \mat{V} \right)^{-1} \vec{v}_L \, .
\]
The simple choice $\mat{\Theta}_\star = \mat{I}$ recovers the procedure proposed by \cite{DelReyFernandez_simultaneous_2018}, though in this work we use $\mat{\Theta}_\star = \mat{H}^{-1}$ given that the extrapolation operators are weighted by $\mat{H}^{-1}$ in the SATs, and in practice, we find that this results in better-performing operators. Once $\vec{t}_L$ and $\vec{t}_R$ are computed, $\mat{E} \coloneqq \vec{t}_R \vec{t}_R^\top - \vec{t}_L \vec{t}_L^\top $ is constructed and the accuracy conditions of Definition \ref{defn:fsbp-operator} are recast using $\mat{S}$, the skew-symmetric part of $\mat{Q}$:
\begin{equation} \label{eq:S_accuracy}
\mat{S} \mat{V} = \mat{H} \mat{V}' - \tfrac{1}{2} \mat{E} \mat{V} \, , \qquad \mat{Q} = \mat{S} + \tfrac{1}{2} \mat{E} \, .
\end{equation}
The advantage of this approach is that it naturally enforces the SBP property by construction. The system \eqref{eq:S_accuracy} can then be recast as a linear system acting on the strictly lower triangular entries of $\mat{S}$, resulting in $Nd$ equations and $N(N-1)/2$ unknowns. The existence of a solution is guaranteed by Theorem~\ref{thm:existence}, and in practice, \eqref{eq:S_accuracy} often has infinite solutions. In \cite{glaubitz_fsbp2023,hale_sbp_2026}, the unique least-squares solution with minimal Euclidean norm is selected for computational convenience.

\subsection{The Need for Operator Optimization} \label{sec:optimization_need}
When considering non-polynomial function spaces, the quadrature rule often demands more nodes than the accuracy of the SBP operator itself. For example, the exponential-enriched polynomial space from \cite[\S 6.2]{glaubitz_fsbp2023},
\[
\fnc{F}_p \coloneqq \fnc{P}_{p} + \operatorname*{span} \left\{e^x\right\} = \operatorname*{span} \left\{x^k, e^x \mid k = 0,1,\ldots,p\right\} \, ,
\]
has dimension $p+2$ and therefore requires only $p+2$ nodes to satisfy the accuracy conditions of Definition \ref{defn:fsbp-operator}. The quadrature rule, however, has basis
\[
\left( \fnc{F}_p \fnc{F}_p \right)' = \operatorname*{span} \left\{x^k, x^l e^x , e^{2x} \mid k = 0,1,\ldots,2p-1, \ \ l = 0,1,\ldots,p \right\} \, ,
\]
which has dimension $3p+2$, and therefore by Theorem \ref{thm:generalized_gaussian_quadrature} requires $\lceil 3p/2 \rceil + 1$ nodes on an open nodal distribution, $\lceil 3(p+1)/2 \rceil $ nodes on a half-open nodal distribution, or $\lceil 3p/2 \rceil + 2$ nodes on a closed nodal distribution. Effectively, for $p \geq 3$ the SBP operators contain additional DOF once the accuracy conditions and SBP property have been enforced. For other bases, this crossover point can occur at even lower dimensions, especially if the space is not closed under differentiation. Here we provide two methodologies to exploit these additional DOF and obtain optimized SBP operators. Both implementations are publicly available as part of the FSBP-operator construction Julia package \url{https://github.com/alexbercik/GaussFSBP}.

\subsection{A Sequential Optimized Construction Procedure} \label{sec:sequential}

We begin by introducing a sequential optimization procedure, whereupon one first exploits DOF in the extrapolation operators $\vec{t}_L$ and $\vec{t}_R$, then uses the remaining DOF in $\mat{S}$ to satisfy additional objectives. This sequential optimization procedure is commonly employed in the literature~\cite{Marchildon_optimization_2020,mattalo_multidimensional_2019}.

SBP operators will have free parameters if $N>d$, or equivalently, if $\mat{V}^\top$ has a nontrivial nullspace. Therefore, we let $\mat{Z} \in \mathbb{R}^{N \times (N-d)}$ be a matrix with columns that form a basis for $\operatorname*{null}{\left(\mat{V}^\top\right)}$. We compute $\mat{Z}$ through a singular value decomposition. Then by taking any solution for $\vec{t}_L$, for example \eqref{eq:tL_min_norm}, which we hereafter denote by $\vec{t}_L^\star$, all possible extrapolation vectors can be parametrized by some coefficient vectors $\vec{\theta}_L , \vec{\theta}_R \in \mathbb{R}^{N-d}$ as
\begin{equation} \label{eq:tL_tR_parametrization}
\vec{t}_L(\vec{\theta}_L) = \vec{t}_L^\star + \mat{Z} \vec{\theta}_L \, , \qquad 
\vec{t}_R(\vec{\theta}_R) = \vec{t}_R^\star + \mat{Z} \vec{\theta}_R \, ,
\end{equation}
where $\vec{t}_L(\vec{\theta}_L)$ and $\vec{t}_R(\vec{\theta}_R)$ satisfy \eqref{eq:tL_tR_accuracy} by construction since $\mat{V}^\top \mat{Z} = \vec{0}$. We now introduce the following objective function,
\begin{equation} \label{eq:J_L}
  \fnc{J}_L(\vec{\theta}_L) \coloneqq \alpha_{L,\mathrm{acc}} \frac{\fnc{J}_{L,\text{acc}}(\vec{\theta}_L)}{\fnc{J}_{L,\text{acc}}(\vec{0})} + \alpha_{L,\mathrm{nrm}} \frac{\norm{\vec{t}_L(\vec{\theta}_L)}_\star^2}{\norm{\vec{t}_L(\vec{0})}_\star^2} ,
  \end{equation}
which has an accuracy component and a norm component, each weighted by some nonnegative scalar $\alpha_{L,\mathrm{acc}}$ and $\alpha_{L,\mathrm{nrm}}$, respectively, and where $\star$ indicates the same weighted norm used in \eqref{eq:tL_min_norm}. For all numerical experiments we use the $\mat{H}^{-1}$-weighted norm.  The norm component in \eqref{eq:J_L} acts as a regularization, preventing the selection of large cancelling entries that may fortuitously minimize the accuracy component for a chosen set of test functions. The accuracy and norm components of $\fnc{J}_L$ are normalized by their initial unoptimized values in \eqref{eq:J_L} so that the two objectives can be properly compared (essentially as ratios to their initial values). The accuracy component is given by
\begin{equation} \label{eq:J_L_details}
\fnc{J}_{L,\text{acc}}(\vec{\theta}_L) \coloneqq \sum_{m=1}^M \beta_m \left( \frac{\vec{t}_L(\vec{\theta}_L)^\top g_m(\vec{x}) - g_m(x_L)}{g_m(x_L)} \right)^2 ,
\end{equation}
where $\left\{ g_m \right\}_{m=1}^M$ are some test functions that lie outside of $\fnc{F}$, and $\beta_m$ are nonnegative weights for the various $g_m$. In practice, we project $g_m$ to be $\mat{H}$-orthogonal to $\fnc{F}$ to ensure proper conditioning and fair weighting of the different $g_m$ functions, and use a guarded normalization for cases where $g_m(x_L) \approx 0$. Since \eqref{eq:J_L} is a sum of squares, each entry of which is affine in $\vec{\theta}_L$, the minimization of $\fnc{J}_L(\vec{\theta}_L)$ can be recast as the following problem:
\[
  \vec{\theta}_L = \operatorname*{argmin}_{\vec{\theta} \in \mathbb{R}^{N-d}} \norm{\mat{A}_L \vec{\theta} - \vec{b}_L} \, , \qquad 
  \mat{A}_L \in \mathbb{R}^{(M+N) \times (N-d)} \, , \quad
  \vec{b}_L \in \mathbb{R}^{(M+N)} \, ,
\]
which can be solved efficiently for the $N-d$ free variables in a single step using a linear least squares solver. An analogous optimization is performed for $\vec{t}_R$ as well. If endpoints are included, the optimization is skipped in favour of setting $\vec{t}_L = \vec{e}_1$ and $\vec{t}_R = \vec{e}_N$. 

If reflection symmetry of $\vec{t}_L$ and $\vec{t}_R$ is desired (this is only appropriate when the nodes and quadrature
weights also have reflection symmetry), it can be enforced directly through the reflection permutation operator $\mat{\Pi}$, defined through $\vec{t}_R = \mat{\Pi} \vec{t}_L$. Instead of \eqref{eq:tL_tR_accuracy}, the extrapolation exactness constraints are then imposed only on the vector $\vec{t}_L$ through
\[
\begin{bmatrix}
  \mat{V}^\top \\
  \mat{V}^\top \mat{\Pi} 
\end{bmatrix} \vec{t}_L = \begin{bmatrix}
  \vec{v}_L \\
  \vec{v}_R
\end{bmatrix} \, ,
\]
and the space of all possible solutions is correspondingly parametrized as
\[
\vec{t}_L (\vec{\theta}) = \vec{t}_L^\star + \mat{Z}_\mathrm{sym} \vec{\theta} \, , \qquad 
\operatorname*{col}\left(\mat{Z}_\mathrm{sym}\right) = \operatorname*{null}\left( \begin{bmatrix}
  \mat{V}^\top \\
  \mat{V}^\top \mat{\Pi} 
\end{bmatrix} \right) \, , \qquad 
\vec{t}_R (\vec{\theta}) = \mat{\Pi} \vec{t}_L (\vec{\theta}) \, .
\]
The optimization process then proceeds as before.

Any remaining free parameters in $\mat{S}$ can then be optimized in a similar fashion. We begin by reducing the number of equations in \eqref{eq:S_accuracy} by following the procedure of \cite{Marchildon_optimization_2020}. That is, we left-multiply \eqref{eq:S_accuracy} by $\left[ \mat{V} , \mat{Z} \right]^\top$ and separate it into two separate equations. Using the property
\[
\mat{V}^\top \mat{H} \mat{V}' + \left( \mat{V}' \right)^\top \mat{H} \mat{V} = \mat{V}^\top \mat{E} \mat{V} \, ,
\]
we obtain
\begin{align} \label{eq:S_accuracy_andre1}
  \mat{V}^\top \mat{S} \mat{V} &= \tfrac{1}{2} \left( \mat{V}^\top \mat{H} \mat{V}' - \left( \mat{V}' \right)^\top \mat{H} \mat{V} \right) \, , \\
  \mat{Z}^\top \mat{S} \mat{V} &=  \mat{Z}^\top \mat{H} \mat{V}' - \tfrac{1}{2} \mat{Z}^\top \mat{E} \mat{V} \, . \label{eq:S_accuracy_andre2}
\end{align}
Since \eqref{eq:S_accuracy_andre1} is skew-symmetric, it only contributes $d(d-1)/2$ independent equations from its strictly lower triangular entries. As a result, there are a total of $N_e \coloneqq d  \left( 2N - d - 1 \right)/2$ independent equations in (\ref{eq:S_accuracy_andre1}-\ref{eq:S_accuracy_andre2}), and since there are $N_s \coloneqq N(N-1)/2$ unknowns in $\mat{S}$, we identify $N_f \coloneqq (N-d)(N-d-1)/2$ free parameters, which is positive whenever $N>d+1$~\cite{Marchildon_optimization_2020}. 
To optimize over the entries in $\mat{S}$, we first reorder (\ref{eq:S_accuracy_andre1}-\ref{eq:S_accuracy_andre2}) into a linear system of the form 
\begin{equation} \label{eq:S_accuracy_flat}
\mat{A}_s \vec{s} = \vec{b}_s \, , \qquad 
\vec{s} \in \mathbb{R}^{N_s} \, , \quad 
\vec{b}_s \in \mathbb{R}^{N_e} \, , \quad 
\mat{A}_s \in \mathbb{R}^{N_e \times N_s} \, ,
\end{equation}
where $\vec{s}$ contains the strictly lower triangular portion of $\mat{S}$. We could at this point introduce an objective function and employ an optimization algorithm on $\vec{s}$, as in \cite{Marchildon_optimization_2020}. Since here we work exclusively with one dimensional operators, however, the total number of parameters is modest enough such that parametrizing the low-dimensional feasible solution space via the nullspace of $\mat{A}_s$ is computationally feasible. Let 
\[
\vec{s}(\vec{\theta}_s) = \vec{s}^\star + \mat{Z}_s \vec{\theta}_s \, , \qquad
\mat{Z}_s \in \mathbb{R}^{N_s \times N_f} \, , \quad 
\vec{\theta}_s \in \mathbb{R}^{N_f} \, ,
\]
where the columns of $\mat{Z}_s$ form a basis for $\operatorname*{null}{\left(\mat{A}_s\right)}$ and $\vec{s}^\star$ is some solution to \eqref{eq:S_accuracy_flat} (e.g. the minimum norm solution via the Moore-Penrose pseudoinverse). 
We now introduce the following objective function,
\begin{equation} \label{eq:J_s}
\fnc{J}_s(\vec{\theta}_s) \coloneqq \alpha_{s,\mathrm{acc}} \frac{\fnc{J}_{s,\text{acc}}(\vec{\theta}_s)}{\fnc{J}_{s,\text{acc}}(\vec{0})} 
+ \alpha_{s,\mathrm{nrm}} \frac{\norm{\mat{D}(\vec{\theta}_s) + \mat{H}^{-1} \vec{t}_L \vec{t}_L^\top}_F^2}{\norm{\mat{D}(\vec{0}) + \mat{H}^{-1} \vec{t}_L \vec{t}_L^\top}_F^2} \, ,
\end{equation}
where $\alpha_{s,\mathrm{acc}}$, $\alpha_{s,\mathrm{nrm}}$, are again some nonnegative weights, and the norm component (or regularization term) uses the Frobenius norm to additionally bound the spectral radii of an upwind linear convection discretization~\cite{Marchildon_optimization_2020}. The accuracy component is
\begin{equation} \label{eq:J_s_details}
    \fnc{J}_{s,\text{acc}}(\vec{\theta}_s) \coloneqq \sum_{m=1}^M \beta_m \frac{\norm{\mat{D}(\vec{\theta}_s) g_m(\vec{x}) - g_m'(\vec{x})}_*^2}{\norm{g_m'(\vec{x})}_*^2} \, ,
    \end{equation}
where $\left\{ g_m \right\}_{m=1}^M$ and $\beta_m$ are the same $\mat{H}$-orthogonal test functions and weights used in \eqref{eq:J_L_details}, and the subscript $*$ indicates some suitable norm. In practice we use the $\mat{H}$-norm. As before, since $\fnc{J}_s(\vec{\theta}_s)$ is of the form of a sum of squares, each entry of which is affine in $\vec{\theta}_s$, the minimization of $\fnc{J}_s(\vec{\theta}_s)$ can be recast as a linear least-squares minimization problem in $\vec{\theta}_s$, which can be solved efficiently in a single step for the unique $N_f$ optimal free variables.

\subsection{A Simultaneous Optimized Construction Procedure} \label{sec:simultaneous}

The main advantage of the previous sequential optimization is the ability to recast the optimizations as linear least-squares problems in a small-dimensional feasibility space. However, it also has the disadvantage that the initial optimization over $\vec{t}_L$ and $\vec{t}_R$ can remove many DOF that could have been exploited to optimize over $\mat{S}$. For example, when $N = d+1$, we expect to have at least one DOF with which to optimize our operators, but this DOF is removed by the initial boundary operator optimization. Here we introduce an alternative optimization procedure that simultaneously optimizes over all the free parameters in $\vec{t}_L$, $\vec{t}_R$, and $\mat{S}$.

We begin by parametrizing the solution space of $\vec{t}_L$ and $\vec{t}_R$ as before in \eqref{eq:tL_tR_parametrization}. Reflection symmetry can also be enforced similarly to the previous section if desired. The accuracy conditions of Definition \ref{defn:fsbp-operator} are again reformulated using the approach of \cite{Marchildon_optimization_2020}. While \eqref{eq:S_accuracy_andre1} remains identical, \eqref{eq:S_accuracy_andre2} now contains explicit dependence on $\vec{\theta}_L$ and $\vec{\theta}_R$ as
\begin{equation} \label{eq:S_accuracy_sim2}
  \mat{Z}^\top \mat{S} \mat{V} =  \mat{Z}^\top \mat{H} \mat{V}' - \tfrac{1}{2} \mat{Z}^\top \vec{t}_R^\star \vec{v}_R^\top + \tfrac{1}{2} \mat{Z}^\top \vec{t}_L^\star \vec{v}_L^\top
  - \tfrac{1}{2} \mat{Z}^\top \mat{Z} \vec{\theta}_R \vec{v}_R^\top + \tfrac{1}{2} \mat{Z}^\top \mat{Z} \vec{\theta}_L \vec{v}_L^\top \, .
\end{equation}
Let $\vec{s}$ be the entries of $\mat{S}$ as defined before. Since both \eqref{eq:S_accuracy_andre1} and \eqref{eq:S_accuracy_sim2} are affine in $\vec{s}$, $\vec{\theta}_L$ and $\vec{\theta}_R$, we can assemble the accuracy constraints (which also directly encode the SBP property) into a linear system of the form 
\begin{equation} \label{eq:S_accuracy_sim_flat}
\mat{A}_\phi \vec{\phi} = \vec{b}_\phi \, , \qquad
 \vec{\phi} \coloneqq \begin{bmatrix}
  \vec{\theta}_L \\
  \vec{\theta}_R \\
  \vec{s}
\end{bmatrix} \in \mathbb{R}^{2(N-d) + N_s} \, , \quad
\vec{b}_\phi \in \mathbb{R}^{N_e} \, , \quad
\mat{A}_\phi \in \mathbb{R}^{N_e \times (2(N-d) + N_s)} \, .
\end{equation}
Once again we parametrize the low-dimensional feasible solution space via the nullspace of $\mat{A}_\phi$. Let 
\[
\vec{\phi}(\vec{\theta}_\phi) = \vec{\phi}^\star + \mat{Z}_\phi \vec{\theta}_\phi \, , \qquad
\mat{Z}_\phi \in \mathbb{R}^{(2(N-d) + N_s) \times N_t} \, , \quad 
\vec{\theta}_\phi \in \mathbb{R}^{N_t} \, ,
\]
where the columns of $\mat{Z}_\phi$ form a basis for $\operatorname*{null}{\left(\mat{A}_\phi\right)}$, $\vec{\phi}^\star$ is a solution to \eqref{eq:S_accuracy_sim_flat}, and $N_t \coloneqq 2(N-d) + N_f = (N-d)(N-d+3)/2$ is the total number of free parameters. The objective function we use to optimize over $\vec{\theta}_\phi$ is then taken as 
\[
\fnc{J}_\phi(\vec{\theta}_\phi) \coloneqq \fnc{J}_L + \fnc{J}_R + \fnc{J}_s \, ,
\]
where $\fnc{J}_L$, $\fnc{J}_R$, and $\fnc{J}_s$ are as defined in \eqref{eq:J_L} and \eqref{eq:J_s} with appropriate weights $\alpha$, and with explicit dependence on $\vec{\theta}_\phi$ in place of $\vec{\theta}_L$ and $\vec{\theta}_s$. $\fnc{J}_\phi$ can then similarly be recast as a sum of squared residuals, but while the constraint manifold remains linear in $\vec{\theta}_\phi$, the residuals are now quadratic in $\vec{\theta}_\phi$. Therefore, rather than obtaining a linear least squares problem, we must employ a nonlinear least squares solver. We choose the Levenberg-Marquardt method~\cite{levenberg_method_1944,marquardt_algorithm_1963} with finite-difference approximations to the Jacobian. The solver is augmented with a robust multi-start approach to ensure that the global minimum is found, but in practice for the operators we considered, we found that the problem was not multi-modal.

\section{Numerical Results} \label{sec:results}

We now employ the quadrature algorithm described in \S\ref{sec:gen_gauss_algo} along with the three operator construction procedures described in \S\ref{sec:optimization} to construct FSBP operators and apply them to the following one-dimensional linear steady problem,
\begin{equation} \label{eq:steady_problem}
  \der[u]{x} = f(x,u) \, , \quad x \in [0,1] \, , \quad u(0) = 1 \, .
\end{equation}
We discretize this equation using the following standard SBP-SAT approach (see e.g.~\cite{DelReyFernandez2014_review}):
\[
\tfrac{1}{h_\kappa}\mat{D} \vec{u}_\kappa = f(\vec{x}_\kappa, \vec{u}_\kappa) + \tfrac{1}{h_\kappa} \mat{H}^{-1} \left[ \vec{t}_R \left( \vec{t}_R^\top \vec{u}_\kappa - \widehat{\vec{u}}_{\kappa,R} \right) - \vec{t}_L \left( \vec{t}_L^\top \vec{u}_\kappa - \widehat{\vec{u}}_{\kappa,L} \right) \right] \, ,
\]
where $\vec{u}_{\kappa}$ is the numerical solution in element $\kappa$ defined at the physical nodes $\vec{x}_{\kappa}$, and $h_{\kappa}$ is the physical size of the element. We have assumed here that the reference element is defined on $\xi \in [0,1]$ and that an affine mapping is used. At interfaces, elements are coupled using the following SATs:
\begin{align*}
\widehat{\vec{u}}_{\kappa,R} = \tfrac{1}{2} \left( \vec{t}_R^\top \vec{u}_{\kappa} + \vec{t}_L^\top \vec{u}_{\kappa+1} \right) + \tfrac{\sigma}{2} \left( \vec{t}_R^\top \vec{u}_{\kappa} - \vec{t}_L^\top \vec{u}_{\kappa+1} \right) \, , \\
\widehat{\vec{u}}_{\kappa,L} = \tfrac{1}{2} \left( \vec{t}_L^\top \vec{u}_{\kappa} + \vec{t}_R^\top \vec{u}_{\kappa-1} \right) + \tfrac{\sigma}{2} \left( \vec{t}_R^\top \vec{u}_{\kappa-1} - \vec{t}_L^\top \vec{u}_{\kappa} \right) \, ,
\end{align*}
where $\sigma=0$ leads to a symmetric SAT, and $\sigma=1$ to an upwind SAT~\cite{DelReyFernandez2014_review}. At the left boundary, we replace $\vec{t}_R^\top \vec{u}_{0}$ with the left-boundary condition $u_0$ and set $\sigma=1$. No boundary condition is enforced at the right boundary. Once the linear system is assembled, we solve for the numerical solution using a direct linear solve. All the following results can be reproduced using the scripts provided in the open-source repository \url{https://github.com/alexbercik/Paper-GaussFSBP}.

\subsection{Exponential Problem} \label{sec:results_exp}

We first consider $f(x,u) = 8 x u$ such that the exact steady solution is ${u(x) = e^{4 x^2}}$. We use the exponential-enriched polynomial space 
\begin{equation} \label{eq:exp_enriched_space}
\fnc{F}_p \coloneqq \fnc{P}_{p}(\xi) + \operatorname*{span} \left\{e^{a \xi}\right\} = \operatorname*{span} \left\{\xi^k, e^{a \xi} \mid k = 0,1,\ldots,p\right\}
\end{equation}
for some mesh-dependent constant $a$ and reference coordinate $\xi \in [0,1]$. Note that the exact solution is deliberately not exactly represented by our enriched basis for any fixed $a$, thereby mimicking a potential scenario in which only partial solution structure is known. As explained in \S\ref{sec:optimization_need}, we consider $p=3$ and $p=4$ operators so that there is an opportunity for operator optimization. For open operators, there are two free parameters for both $p=3$ and $p=4$, while for closed operators there is only one. The exponent $a$ is chosen so that the enrichment best approximates the component of $e^{4x^2}$ not captured by $\fnc{P}_p$. After projecting out $\fnc{P}_p$, we find that $e^{\alpha x}$ with $\alpha = 11$ leads to an effective $L_2$ approximation on any given physical element, which corresponds to the local basis function exponent $a=\alpha h$ on a uniform mesh with $N_e$ elements of width $h = 1/N_e$.

\begin{remark}
  Selecting an appropriate exponent $\alpha$ in the basis function $e^{\alpha x}$ for the approximation of $e^{4x^2}$ is crucial to ensure optimal performance of the FSBP scheme. For example, for this problem, reducing from $\alpha = 11$ to $\alpha = 6$ reduces the performance of the enriched operators by roughly one order of magnitude (though they still outperform the polynomial operators). In \cite{yuan_discontinuous_2006}, an adaptive projection procedure was employed to find the best possible $\alpha$ for a given numerical solution, which may provide guidance on how to appropriately set the exponent in situations where little is known about the exact solution.
\end{remark}

In Figures \ref{fig:problem1_up} and \ref{fig:problem1_sym} we plot the numerical solution errors in the $\mat{H}$-norm as a function of total DOF. We include results for LG and LGL polynomial operators of degree $p$ and $p+1$, as well as the minimum-norm\footnote{We use the label ``minimum-norm'' to refer to the operators resulting from the standard construction procedure of \S\ref{sec:std_construction}, i.e.\,the minimum Euclidean-norm solution of the linear system defining the entries of $\mat{S}$ in \eqref{eq:S_accuracy}. In general, these are not the same as the operators that minimize the norm objectives found in either of the optimized approaches. When the minimum-norm operators are closed, they match the operators of~\cite{hale_sbp_2026}.} exponential-enriched operators that employ the standard construction procedure of \S\ref{sec:std_construction}. For open operators, we include both the sequential and simultaneous optimized exponential-enriched operators described in \S\ref{sec:sequential} and \S\ref{sec:simultaneous}, respectively. We use the optimization parameters
\begin{equation} \label{eq:optimization_parameters}
\alpha_{L,\mathrm{acc}} = \alpha_{R,\mathrm{acc}} = \alpha_{s,\mathrm{acc}} = 1 \, , \quad 
\alpha_{L,\mathrm{nrm}} = \alpha_{R,\mathrm{nrm}} = \alpha_{s,\mathrm{nrm}} = 0.1 \, ,
\end{equation}
and optimization test functions
\begin{equation} \label{eq:test_functions}
\left\{ g_m(\xi) \right\} = \left\{ \xi^{p+1} , \xi e^{a \xi}, \xi^2 e^{a \xi}, e^{2 a \xi}  \right\} \, , \quad
\left\{ \beta_m \right\} = \left\{ 1, 1, 1, 4 \right\} \, .
\end{equation}
For closed operators, there is no distinction between the sequential and simultaneous optimizations. Note that the degree $p+1$ polynomial operators possess the same number of basis functions as the enriched FSBP operators. Finally, we compare the closed operators to the equispaced FSBP operators of~\cite{Glaubitz_optimization_2025}, which we build using the open-source repository \texttt{SummationByPartsOperatorsExtra.jl}\footnote{\url{https://github.com/JoshuaLampert/SummationByPartsOperatorsExtra.jl}}. These operators employ an optimization procedure to find the $\mat{H}$ quadrature and $\mat{S}$ skew-symmetric matrices simultaneously, and we select operators with the minimal possible number of nodes for a given mesh such that the accuracy conditions of Definition~\ref{defn:fsbp-operator} are satisfied to at least $10^{-8}$ at each node on the reference element.

\begin{figure}[t]
  \centering
  \captionsetup[subfigure]{justification=centering,singlelinecheck=false}

  \begin{subfigure}[t]{0.48\textwidth}
      \centering
      \includegraphics[height=4.5cm, trim={0 10 6 6}, clip]{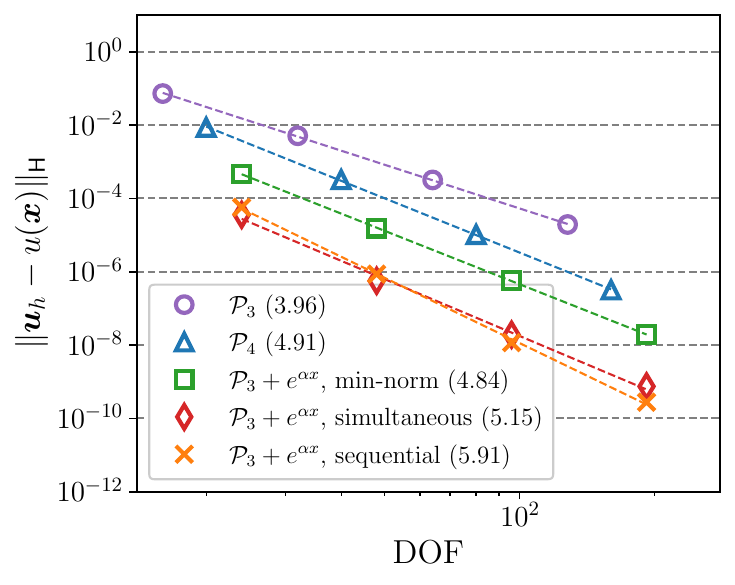}
      \caption{Open, $p=3$.}
  \end{subfigure}
  \hfill
  \begin{subfigure}[t]{0.48\textwidth}
    \centering
    \includegraphics[height=4.5cm, trim={0 10 6 6}, clip]{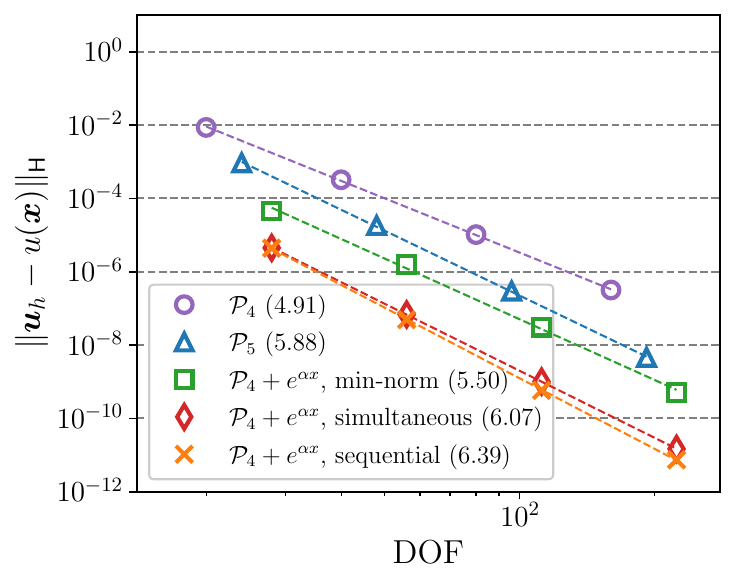}
    \caption{Open, $p=4$.}
\end{subfigure}

  \vspace{0.5em}

  \begin{subfigure}[t]{0.48\textwidth}
    \centering
    \includegraphics[height=4.5cm, trim={0 10 6 6}, clip]{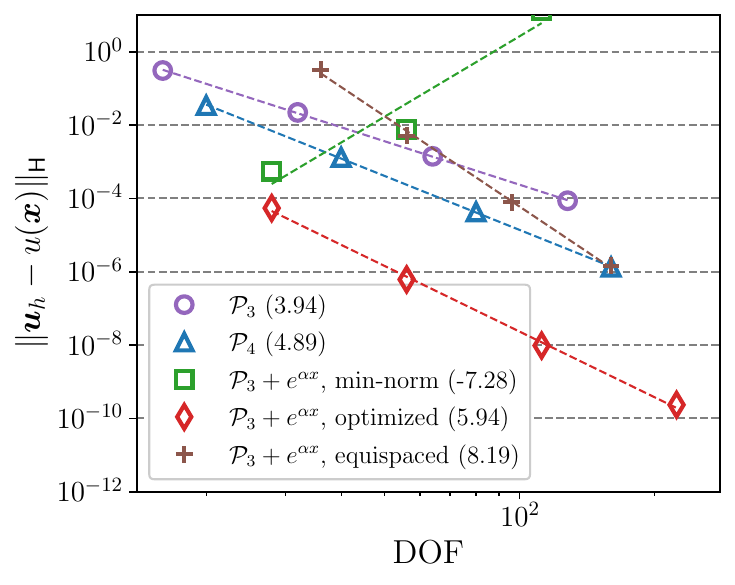}
    \caption{Closed, $p=3$.}
\end{subfigure}
\hfill
\begin{subfigure}[t]{0.48\textwidth}
  \centering
  \includegraphics[height=4.5cm, trim={0 10 6 6}, clip]{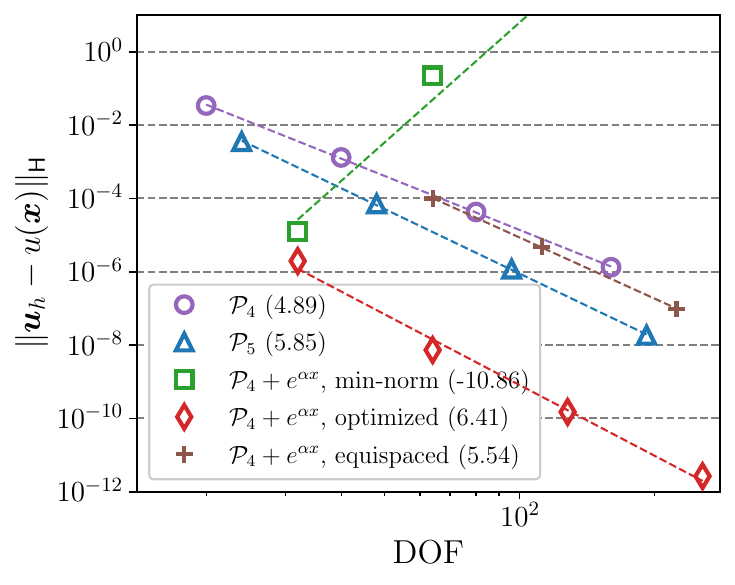}
  \caption{Closed, $p=4$.}
\end{subfigure}

  \caption{Solution errors for the squared exponential problem of \S\ref{sec:results_exp} with upwind SATs. Convergence rates are reported in parentheses in the legend.}
  \label{fig:problem1_up}
\end{figure}

\begin{figure}[t]
  \centering
  \captionsetup[subfigure]{justification=centering,singlelinecheck=false}

  \begin{subfigure}[t]{0.48\textwidth}
      \centering
      \includegraphics[height=4.5cm, trim={0 10 6 6}, clip]{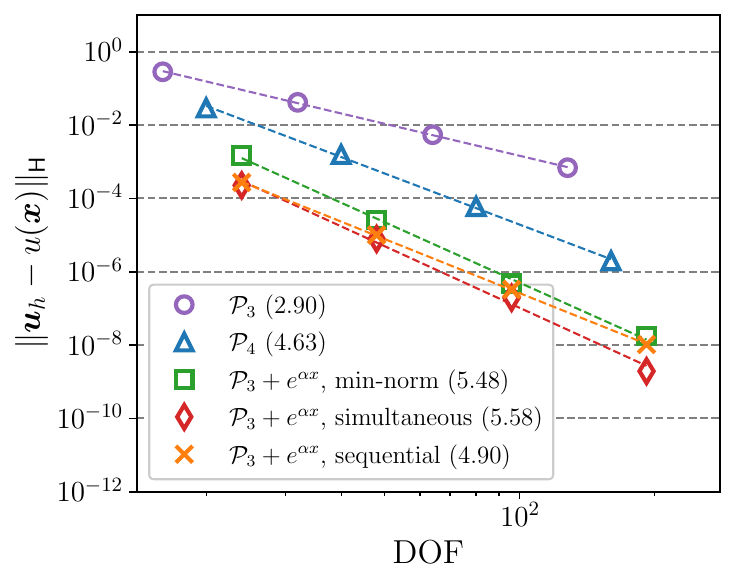}
      \caption{Open, $p=3$.}
  \end{subfigure}
  \hfill
  \begin{subfigure}[t]{0.48\textwidth}
    \centering
    \includegraphics[height=4.5cm, trim={0 10 6 6}, clip]{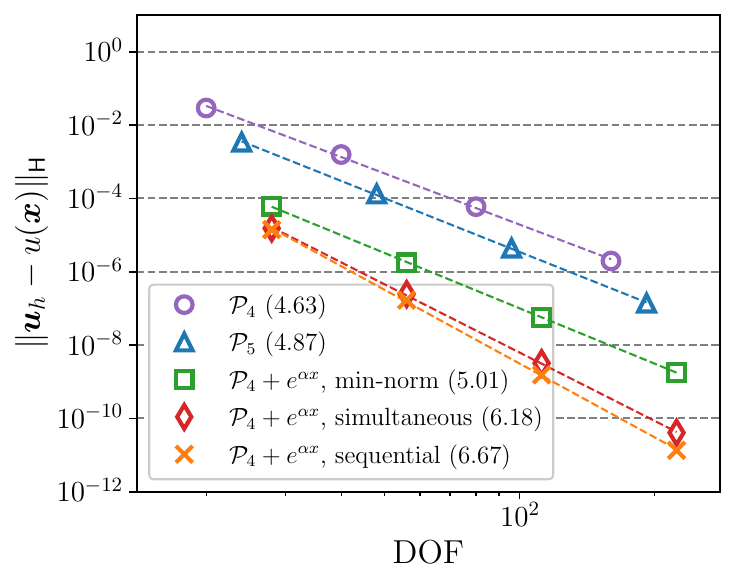}
    \caption{Open, $p=4$.}
\end{subfigure}

  \vspace{0.5em}

  \begin{subfigure}[t]{0.48\textwidth}
    \centering
    \includegraphics[height=4.5cm, trim={0 10 6 6}, clip]{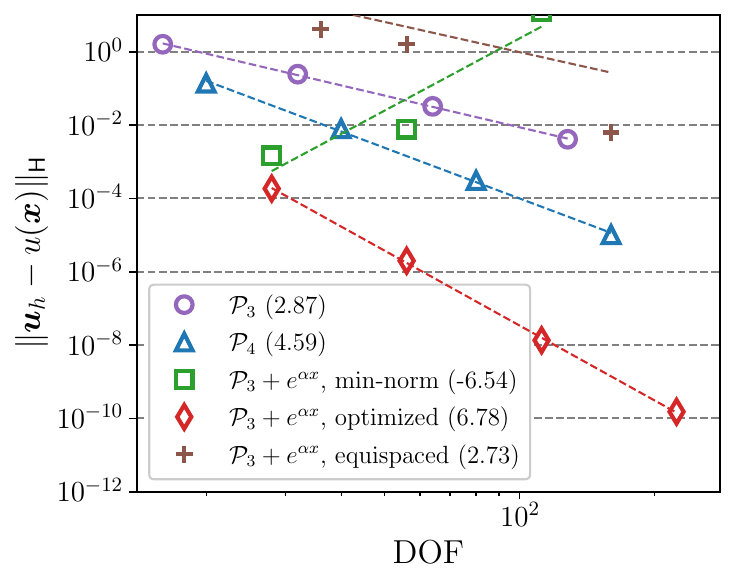}
    \caption{Closed, $p=3$.}
\end{subfigure}
\hfill
\begin{subfigure}[t]{0.48\textwidth}
  \centering
  \includegraphics[height=4.5cm, trim={0 10 6 6}, clip]{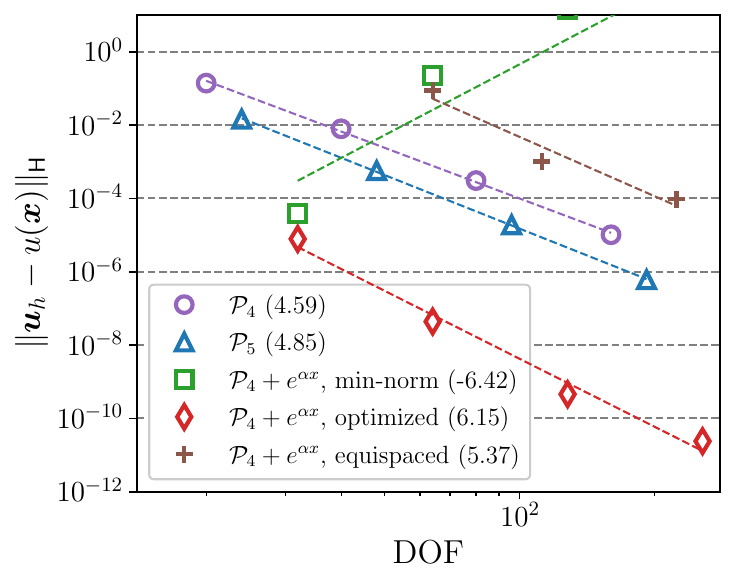}
  \caption{Closed, $p=4$.}
\end{subfigure}

  \caption{Solution errors for the squared exponential problem of \S\ref{sec:results_exp} with symmetric SATs. Convergence rates are reported in parentheses in the legend.}
  \label{fig:problem1_sym}
\end{figure}

Compared to polynomial operators, the exponential-enriched FSBP operators lead to significantly more accurate numerical solutions. Even without any operator optimization, the minimum-norm operators lead to between one and two orders of magnitude smaller error for a given DOF. However, this result is still suboptimal, especially for the closed operators, where the numerical error of the minimum-norm operators is actually shown to increase significantly as the mesh is refined. The reason is two-fold. First, in all cases tested the minimum-norm operator $\mat{D}$ is nullspace-inconsistent (i.e.\,it has a rank of $N-2$ instead of the expected $N-1$), which is well-known to impact the accuracy and convergence of a discretization~\cite{svard_convergence_2019,glaubitz_sbp_not_enough}. Second, for the closed operators, $\mat{D}$ is extremely ill-conditioned, having both very large and very small entries. The operator optimizations resolve both issues and dramatically improve the solution error by an additional one to two orders of magnitude. A similar result was found in \cite{glaubitz_sbp_not_enough}, where adding a regularization term to the optimization construction procedure of equispaced FSBP operators led to more accurate and nullspace-consistent operators. The two optimization procedures (sequential and simultaneous) are found to produce similarly-performing operators, indicating that the extrapolation and volume accuracy objectives $\fnc{J}_{L/R,\mathrm{acc}}$ and $\fnc{J}_{s,\mathrm{acc}}$ do not in general compete. That is, optimizing an operator for extrapolation accuracy of $\vec{t}_L$ and $\vec{t}_R$ also leads to an operator that has an accurate derivative $\mat{D}$. 

For the closed operators, the comparison to equispaced FSBP operators is striking. Because many more nodes are required to find an equispaced operator as opposed to one built with generalized Gaussian quadrature for the same function space, their relative performance with respect to DOF suffers greatly. The equispaced operators exhibit between 3 to 7 orders of magnitude higher numerical error than the optimized FSBP operators. In fact, their solution error is often even worse than that of the polynomial LG and LGL operators. For some cases, the error of the equispaced operators initially converges faster than any of the other operators; however this is simply a result of the equispaced operators requiring fewer nodes to represent the slower exponentials $e^{\alpha h \xi}$ on the reference element as the number of elements $N_e = 1/h$ increases. In fact, if $N_e$ is pushed too high, then the basis becomes nearly linearly dependent and the optimization construction procedure of the equispaced operators struggles to converge. In general, all nullspace-consistent discretizations (i.e.\, excluding the minimum-norm operators) converge at the expected rates of at least $N_b + 0.5$, where $N_b$ is the number of basis functions~\cite{yuan_discontinuous_2006,Yang_dg_2016}\footnote{The expected error convergence rate of $N_b + 0.5$ requires the assumption that the operator basis forms an ET-system~\cite{Yang_dg_2016}, which in this case it does.}.

\subsection{Mixed Exponential Problem} \label{sec:results_mixed_exp}

We next seek to evaluate the performance of the novel FSBP operators on a mixed problem that contains both an exponential boundary layer and a nontrivial smooth solution in the interior. This is representative of a practical scenario in which the problem contains significant flow features that are not captured by the enriched basis. We consider
\[
f(x) = \tfrac{1}{2} \! \left( 1 -2 x \right) \sin(5 \pi x) + \tfrac{5}{2} \pi (x - x^2) \cos(5 \pi x) + \frac{\alpha \, e^{\alpha (x-1)}}{1 - e^{-\alpha}} - 1
\]
such that the exact solution is
\[
u(x) = \tfrac{1}{2} \! \left( x - x^2 \right) \sin(5 \pi x) + \frac{e^{\alpha (x-1)}- e^{-\alpha}}{1 - e^{-\alpha}} - x + 1 \, ,
\]
where we set $\alpha = 80$ in the following experiments. We plot $u(x)$ in Figure \ref{fig:problem2_sol}. 

We employ FSBP operators built with the same exponential-enriched polynomial space \eqref{eq:exp_enriched_space} as in \S\ref{sec:results_exp} with either $p=3$ or $p=4$, and local mesh-dependent constant $a=\alpha h$, where $h = 1/N_e$ is the physical element width. Because the sequential and simultaneous optimization procedures were found to produce similarly-performing exponential-enriched operators in~\S\ref{sec:results_exp}, we only consider the simultaneous-optimized operator for this problem. We use the same optimization parameters \eqref{eq:optimization_parameters} and optimization test functions \eqref{eq:test_functions} as in \S\ref{sec:results_exp}, except that now we emphasize accuracy of the polynomial test function by replacing the test weights in~\eqref{eq:test_functions} with~$\{ \beta_m \} = \{ 4, 1, 1, 1 \}$. For comparison, we again include results for polynomial LG and LGL operators, but now of even higher degrees $p+1$ and $p+2$, as the degree $p+1$ polynomial operators have the same number of basis functions, but the degree $p+2$ operators have comparable DOF. We also include the minimum-norm operators to once again illustrate the importance of the optimization procedures in achieving nullspace-consistency, acceptable operator conditioning, and optimal performance.

\begin{figure}[t]
  \centering
  \includegraphics[height=4.5cm, trim={6 10 6 4}, clip]{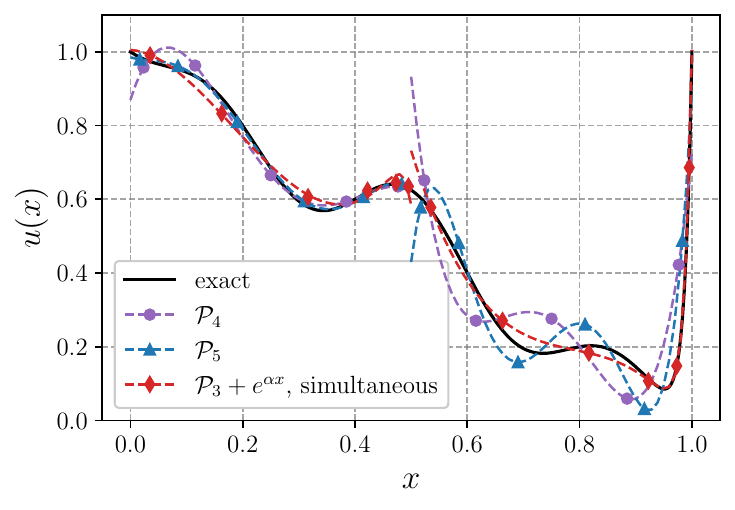}
  \caption{Solution profiles and nodal values for the mixed exponential problem of \S\ref{sec:results_mixed_exp} with upwind SATs, open operators, and two elements. For the polynomial operators, the profiles are interpolations, while for the exponential-enriched operators an $\mat{H}$-weighted projection onto the basis $\fnc{F}_p$ is used (e.g.\,see \cite{Montoya2022}).}
  \label{fig:problem2_sol}
\end{figure}

In Figure \ref{fig:problem2}, we plot the solution errors under mesh refinement with upwind SATs. The optimized FSBP operators again lead to significantly more accurate numerical solutions than the polynomial operators, between four and five orders of magnitude smaller error than the $p+1$ polynomial operators (i.e.\,with the same number of basis functions), and between three and four orders of magnitude smaller error than the higher degree $p+2$ polynomial operators. As seen from the solution profiles in Figure \ref{fig:problem2_sol}, the FSBP operators perform comparably to the polynomial operators on the smooth interior, but are simultaneously able to dramatically reduce the error in the boundary layer, which dominates the numerical error of the polynomial discretizations. Although the minimum-norm operators still outperform the polynomial operators on coarse meshes, for the closed operators, we once again observe that as the basis becomes more linearly dependent, the operators become ill-conditioned and the error increases significantly. Once again, all nullspace-consistent discretizations converge at the expected rates of at least $N_b + 0.5$~\cite{yuan_discontinuous_2006,Yang_dg_2016}, though the optimized FSBP operators consistently exceed this estimate. Similar results are observed for symmetric SATs, demonstrating that the optimized FSBP operators are capable of capturing nontrivial solution features in a manner that is competitive with the polynomial operators, while also resolving known regions of high gradients to a much greater accuracy, thereby achieving overall superior performance.

\begin{figure}[t]
  \centering
  \captionsetup[subfigure]{justification=centering,singlelinecheck=false}

  \begin{subfigure}[t]{0.48\textwidth}
      \centering
      \includegraphics[height=4.5cm, trim={0 10 6 4}, clip]{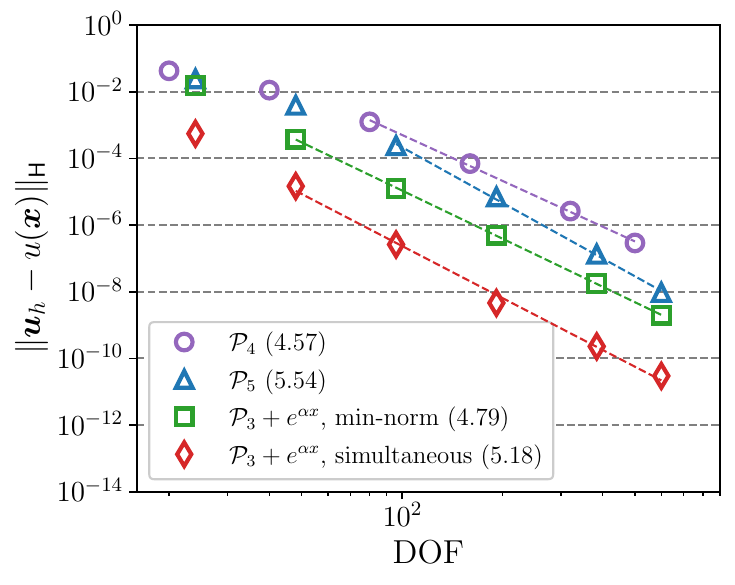}
      \caption{Open, $p=3$.}
  \end{subfigure}
  \hfill
  \begin{subfigure}[t]{0.48\textwidth}
    \centering
    \includegraphics[height=4.5cm, trim={0 10 6 4}, clip]{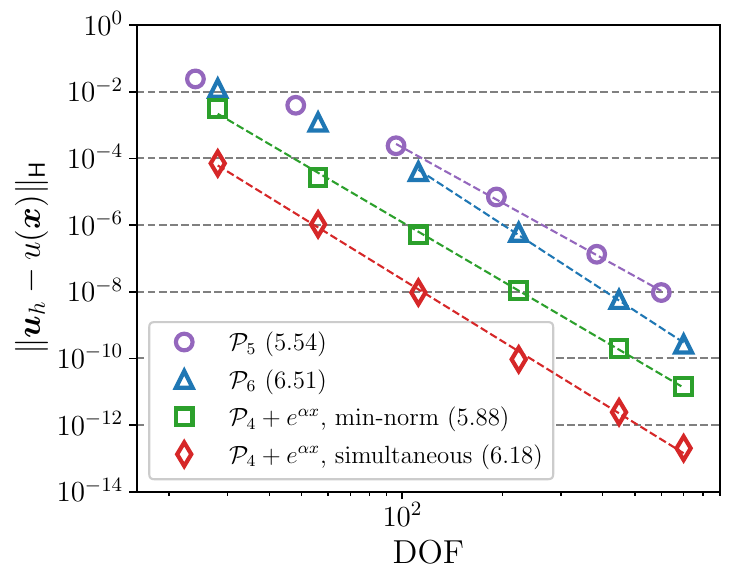}
    \caption{Open, $p=4$.}
\end{subfigure}

  \vspace{0.5em}

  \begin{subfigure}[t]{0.48\textwidth}
    \centering
    \includegraphics[height=4.5cm, trim={0 10 6 4}, clip]{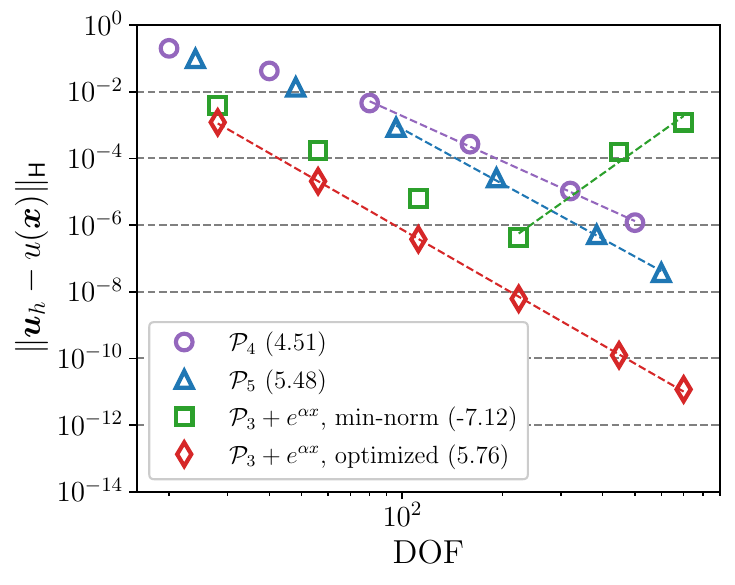}
    \caption{Closed, $p=3$.}
\end{subfigure}
\hfill
\begin{subfigure}[t]{0.48\textwidth}
  \centering
  \includegraphics[height=4.5cm, trim={0 10 6 4}, clip]{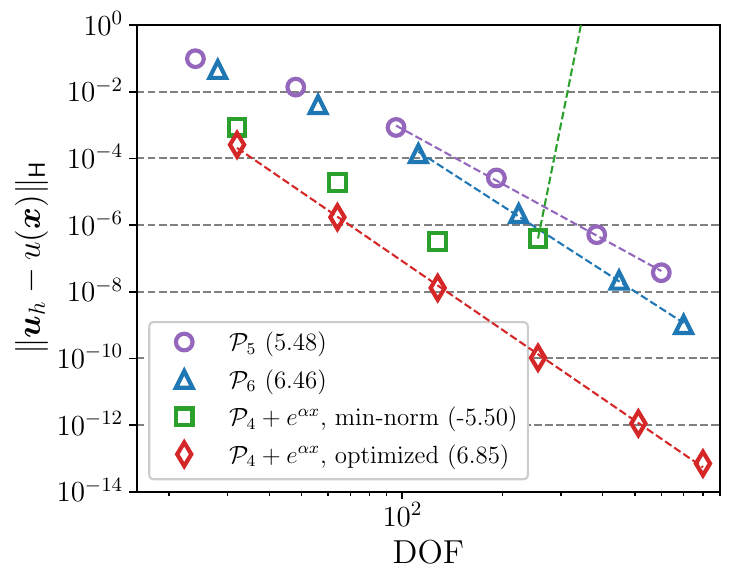}
  \caption{Closed, $p=4$.}
\end{subfigure}

  \caption{Solution errors for the mixed exponential problem of \S\ref{sec:results_mixed_exp} with upwind SATs. Convergence rates are reported in parentheses in the legend.}
  \label{fig:problem2}
\end{figure}

\subsection{Mixed Endpoint Singularity} \label{sec:results_mixed_sqrt}

As a final numerical example, we demonstrate the potential of FSBP operators to significantly reduce numerical error in problems involving singularities. To this end, we introduce the following mixed problem that contains a square-root endpoint singularity combined with the same nontrivial smooth interior as in \S\ref{sec:results_mixed_exp}. We consider
\[
f(x) = \tfrac{1}{2} \! \left( 1 -2 x \right) \sin(5 \pi x) + \tfrac{5}{2} \pi (x - x^2) \cos(5 \pi x) - \frac{1}{\sqrt{1-x}} \, ,
\]
which results in the following exact solution:
\[
u(x) = \tfrac{1}{2} \! \left( x - x^2 \right) \sin(5 \pi x) + 2 \sqrt{1-x}  \, .
\]
We use the shifted square-root-enriched polynomial space 
\begin{equation} \label{eq:sqrt_enriched_space}
\fnc{F}_{p,\ell} \coloneqq \fnc{P}_{p}(\xi) + \operatorname*{span} \left\{\sqrt{\ell-\xi}\right\} = \operatorname*{span} \left\{\xi^k, \sqrt{\ell-\xi} \mid k = 0,1,\ldots,p\right\}
\end{equation}
for some element numbering $\ell$ from the right boundary.
That is, $\ell=1$ denotes the element adjacent to the singular endpoint $x=1$, $\ell=2$ denotes its left neighbour, and so on. The quadrature basis is given by
\begin{equation} \label{eq:sqrt_quadrature_basis}
  \left( \fnc{F}_{p,\ell} \fnc{F}_{p,\ell} \right)' = \operatorname*{span} \left\{
  \xi^k, \frac{\xi^l}{\sqrt{\ell-\xi}}
  \mid k = 0,1,\ldots,2p-1, \
  l = 0,1,\ldots,p\right\} \, ,
\end{equation}
which is of dimension $3p+1$.
The shift in \eqref{eq:sqrt_enriched_space} arises from the portion of the physical function `seen' in the reference element under the affine mapping. On an element $[x_j,x_j+h]$ with $x=h\xi+x_j$ and $x_j=1-h\ell$, we have
\[
\sqrt{1-x} = \sqrt{1-x_j-h\xi}
= \sqrt{h}\sqrt{\ell-\xi}.
\]
The element numbering $\ell$ therefore specifies the location of the singularity in the reference coordinate. In the rightmost element ($\ell=1$), the appropriate basis function is $\sqrt{1-\xi}$ so that the operator can capture the singularity. In all other interior elements ($\ell > 1$), the singularity lies outside the reference interval, resulting in a smooth basis function.

This differs from the scaled exponential enrichment used in 
\S\ref{sec:results_exp}. There we have
\[
e^{\alpha x} = e^{\alpha x_j} e^{\alpha h\xi},
\]
so the element location appears only through a multiplicative constant. Therefore, on a uniform mesh, a single reference operator with exponent $a=\alpha h$ can be used in every element. For the square-root enrichment, however, the element location appears inside the basis function itself, so the reference operator depends on $\ell$, even though the physical size of the element $h$ is irrelevant.

A consequence of using the shifted square-root-enriched basis is that as $\ell$ increases, the basis becomes nearly linearly dependent, making the quadrature algorithm and resulting operators ill-conditioned. Moreover, the enriched operators are primarily useful near the singularity. Therefore, we use the shifted square-root-enriched FSBP operators only in elements with the right-endpoint past $x=0.9$, i.e.\,$x_j + h \geq 0.9$. For elements to the left of $x=0.9$, we replace the shifted square-root basis function with the next degree polynomial basis function, effectively setting 
\[
\fnc{F}(\xi,x_j) = \begin{cases} 
  \fnc{P}_{p+1}(\xi) & \text{if } x_j < 0.9 - h \\
  \fnc{F}_{p,\ell}(\xi) & \text{if } x_j \geq 0.9  - h \end{cases} \, .
\]

\begin{table}[t]
  \centering
  \begin{tabular}{lcccccc}
  Type      & $p$ & 
  $\dim(\fnc{F}_{p,\ell})$ & 
  $\dim((\fnc{F}_{p,\ell} \fnc{F}_{p,\ell})')$ & Add $\xi^{2p}$? & $N$ & $N_f$ \\ \hline
  open & 3 & 5 & 10 & no & 5 & 0 \\
  half-open & 3 & 5 & 10 & yes & 6 & 1 \\
  closed & 3 & 5 & 10 & no & 6 & 0 \\
  open & 4 & 6 & 13 & yes & 7 & 2 \\
  half-open & 4 & 6 & 13 & no & 7 & 1 \\
  closed  & 4 & 6 & 13 & yes & 8 & 1   
  \end{tabular}
  \caption{The dimensions of the operator and quadrature basis functions \eqref{eq:sqrt_enriched_space} and \eqref{eq:sqrt_quadrature_basis}, whether we augment the quadrature basis with $\xi^{2p}$, the number of nodes $N$, and the number of free parameters $N_f$ for the shifted square-root-enriched FSBP operators.}
  \label{tab:sqrt_operators}
\end{table}

We present results for these FSBP operators using both $p=3$ and $p=4$, with either open or closed nodal distributions. For the closed operators, the rightmost element is replaced with a half-open operator to avoid evaluating derivatives at the singular endpoint. The number of nodes, quadrature basis functions, and free parameters available for operator optimization are summarized in Table \ref{tab:sqrt_operators}. When an additional quadrature basis function is required to generate a desired nodal distribution, we augment \eqref{eq:sqrt_quadrature_basis} with the next degree polynomial, $\xi^{2p}$. When free parameters are present in the operator construction, we employ the simultaneous optimization procedure with optimization parameters \eqref{eq:optimization_parameters} and optimization test functions
\begin{equation*}
\left\{ g_m(\xi) \right\} = \left\{ \xi^{p+1} , \xi^{p+2}  \right\} \, , \quad
\left\{ \beta_m \right\} = \left\{ 2, 1 \right\} \, .
\end{equation*}
For comparison, we include results with polynomial LG and LGL operators of degrees $p+1$ and $p+2$, and similarly replace the rightmost LGL operator with a half-open Radau operator. Finally, to emphasize the importance of using the shifted square-root basis \eqref{eq:sqrt_enriched_space}, we also include results for non-shifted square-root basis, which can be obtained by setting $\ell=1$ in \eqref{eq:sqrt_enriched_space} and \eqref{eq:sqrt_quadrature_basis} for all enriched elements. 

\begin{figure}[t]
  \centering
  \captionsetup[subfigure]{justification=centering,singlelinecheck=false}

  \begin{subfigure}[t]{0.52\textwidth}
      \centering
      \includegraphics[height=4.5cm, trim={0 10 6 4}, clip]{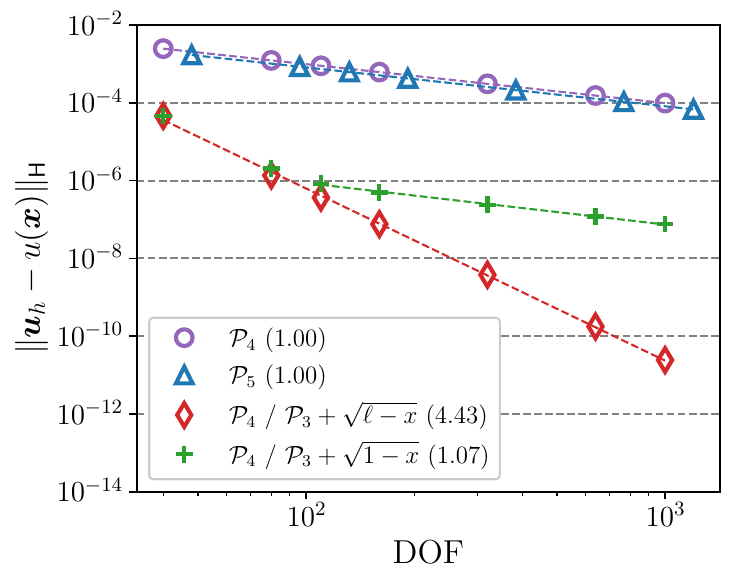}
      \caption{Open, $p=3$.}
  \end{subfigure}
  \hfill
  \begin{subfigure}[t]{0.46\textwidth}
    \centering
    \includegraphics[height=4.5cm, trim={0 10 6 4}, clip]{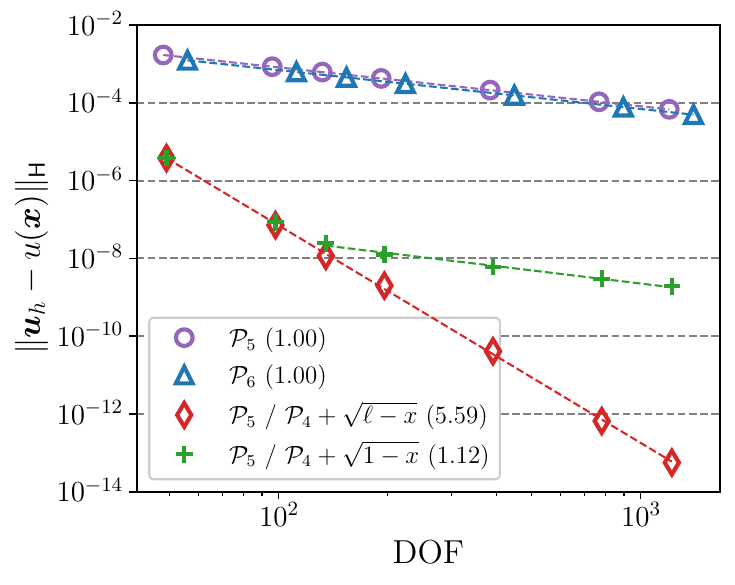}
    \caption{Open, $p=4$.}
\end{subfigure}

  \vspace{0.5em}

  \begin{subfigure}[t]{0.52\textwidth}
    \centering
    \includegraphics[height=4.5cm, trim={0 10 6 4}, clip]{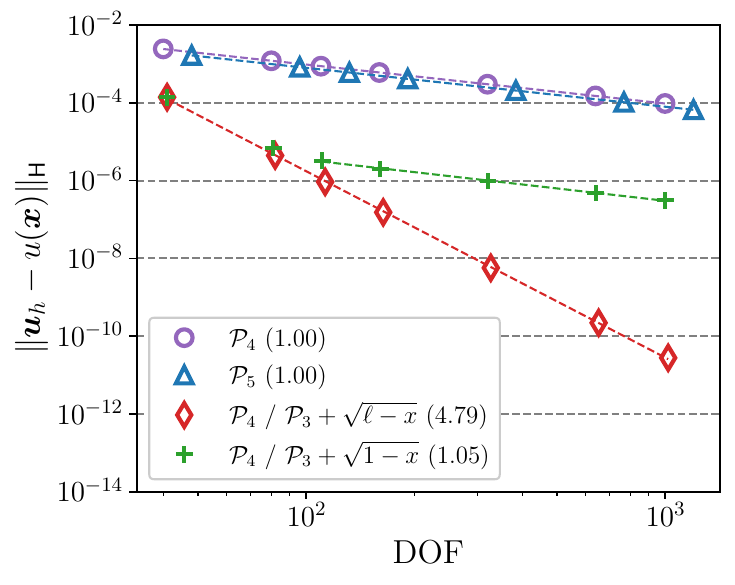}
    \caption{Closed in interior, half-open at right boundary, $p=3$.}
\end{subfigure}
\hfill
\begin{subfigure}[t]{0.46\textwidth}
  \centering
  \includegraphics[height=4.5cm, trim={0 10 6 4}, clip]{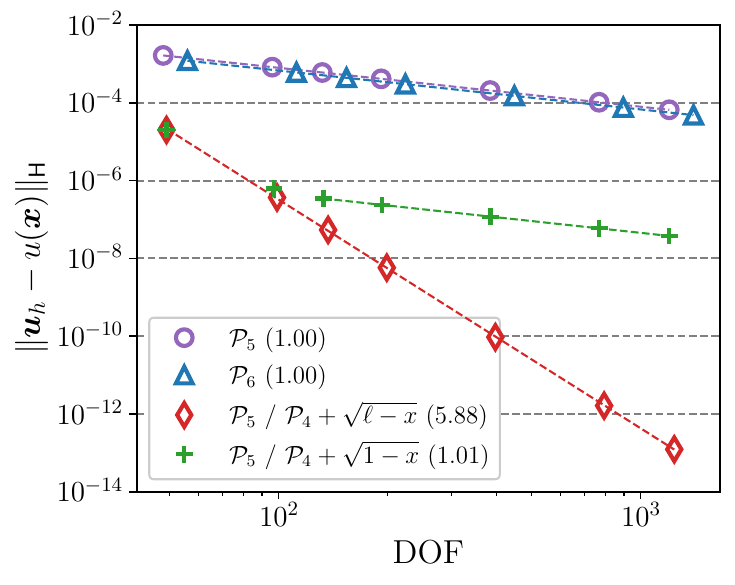}
  \caption{Closed in interior, half-open at right boundary, $p=4$.}
\end{subfigure}

  \caption{Solution errors for the mixed endpoint singularity problem of \S\ref{sec:results_mixed_sqrt} with upwind SATs. Convergence rates are reported in parentheses in the legend.}
  \label{fig:problem3}
\end{figure}

In Figure \ref{fig:problem3}, we compare the solution errors with upwind SATs. Due to the presence of the endpoint singularity, where the derivative is unbounded, the polynomial discretizations converge at a rate of 1, regardless of the degree $p$ of the operator. The shifted square-root-enriched FSBP operators however are able to exactly represent the singularity, and therefore recover design-order convergence of $p+1/2$~\cite{yuan_discontinuous_2006,Yang_dg_2016}, exhibiting drastically smaller errors than the polynomial operators across all mesh refinement levels. The non-shifted square-root basis initially also exhibits much smaller errors than the polynomial operators, but eventually reverts to first-order error convergence as the mesh is refined. This is because the singularity at the right-most element initially dominates the error, so once it is removed exactly in that element, some design-order convergence can be observed by reducing the now-dominant error of the smooth interior. Eventually, however, as the mesh is refined, the elements in the vicinity of the singularity see larger and larger derivatives, and therefore once again dominate the overall error. If the basis function we augment with does not accurately capture the physical function that produces these high gradients (in this case using $\sqrt{1-\xi}$ instead of $\sqrt{\ell-\xi}$), then we no longer see a meaningful benefit of using a non-polynomial basis in those elements. This stresses the importance of tailoring the non-polynomial enrichment functions to the specific problem at hand.

\section{Conclusions and Future Work} \label{sec:conclusion}

We have developed optimized FSBP operators with minimal DOF that, for problems involving large or unbounded gradients where some a priori knowledge of the solution is available, can outperform traditional polynomial operators. Operators with open, closed, and half-open nodal distributions were constructed using generalized Gaussian quadratures found through an improved version of the algorithm of Huybrechs~\cite{Huybrechs_computation_2022}. We provide a publicly available implementation of the modified algorithm that includes a number of improvements that allow the tool to be widely used by the community, including support for non-differentiable bases, arbitrary precision, and robust fallback mechanisms. We also introduced two operator optimization strategies to exploit free parameters when the quadrature rule results in more nodes than needed to satisfy the FSBP operator accuracy conditions. 

We then constructed a few example operators with both an exponential- and a square-root-enriched basis and tested them on problems involving a well-behaved smooth interior solution coupled with a boundary region containing either large or unbounded gradients. In both cases, the optimized FSBP operators outperformed traditional LGL and LG polynomial operators by several orders of magnitude in solution accuracy per DOF. The novel FSBP operators also significantly outperformed equispaced FSBP operators, which in general use significantly more DOF for the same basis. In addition, our novel half-open and open FSBP operators open the door to studying problems with endpoint singularities, where we have demonstrated that if sufficient knowledge of the solution is available, we can recover design-order convergence while other methods are limited to first-order convergence. We also found a clear benefit to the operator optimization strategies introduced in this work, as the standard FSBP construction procedure leads to nullspace-inconsistent and poorly-conditioned operators at higher orders.

Future work should involve the application of these optimized FSBP operators to the study of practical problems, such as those found in aerodynamics (e.g.\,\cite{krank_wall_2018,brill_enriched_2020,zhang_exponential_2020,Fidkowski_basis_2025}). This will require the investigation of curvilinear coordinates, second derivatives, and functional accuracy. We also plan to explore the use of an adaptive basis, such as in \cite{yuan_discontinuous_2006}, for cases where limited solution structure is known. Finally, future work could explore connections to the wider XFEM/GFEM or DEM literature, as incorporating the SBP property into these methods could result in very flexible, efficient, and robust numerical methods suitable for the study of problems with singularities, high gradients, or features requiring high resolution.

\begin{acknowledgements}
  Generative AI (ChatGPT Codex and Cursor Composer) was used to assist in the generation of code for the provided open-source software. The first author is grateful to Zelalem Worku for a helpful discussion regarding the convergence orders of FSBP operators.
\end{acknowledgements}

\section*{Statements and declarations}

\subsection*{Funding}
This research was supported by the Government of Ontario, the University of Toronto, and Ansys, Inc.

\subsection*{Author contributions}

Alex Bercik: Conceptualization, methodology, software, investigation, visualization, writing -- original draft;\\
Lisa Patrascu: Investigation, visualization; \\
David W.\ Zingg: Supervision, conceptualization, writing -- review \& editing.

\subsection*{Conflict of interest}
The authors declare that they have no conflict of interest.

\subsection*{Data Availability}
The scripts to reproduce all of the numerical results are accessible via the open-source repository \url{https://github.com/alexbercik/Paper-GaussFSBP}. The improved quadrature algorithm is available as a Julia package at \url{https://github.com/alexbercik/GeneralizedGauss.jl}, and the operator construction and optimization procedures are also available as a Julia package at \url{https://github.com/alexbercik/GaussFSBP}.

\appendix

\section{The Improved Generalized Gaussian Quadrature Algorithm of Huybrechs} \label{app:gauss_algo}

In this section, we briefly review the generalized Gaussian quadrature algorithm of Huybrechs~\cite{Huybrechs_computation_2022}, together with the modifications we have made to the original implementation. The Julia package is publicly available at \url{https://github.com/alexbercik/GeneralizedGauss.jl}, which itself is a fork of the original implementation available at \url{https://github.com/daanhb/GeneralizedGauss.jl}. For brevity, we will assume the reader is familiar with the theoretical background presented in~\cite{Huybrechs_computation_2022} that is necessary to understand the algorithm.

\paragraph{Overview}

The algorithm is initiated with a vector of callable quadrature basis functions $\left\{ f_j(x) \right\}_{j=1}^K$ that form a CT-system, and optionally, a vector of callable derivatives $\left\{ f_j'(x) \right\}_{j=1}^K$ and moments $\left\{ c_j \right\}_{j=1}^K$ with respect to some desired measure. If the moments are not provided, they are computed to machine precision using the \texttt{BasisFunctions} and \texttt{DomainIntegrals} packages. The code returns different principal representations depending on the parity of $K$.  Let $\ell \in \mathbb{N}$. If $K=2\ell$, then \texttt{principal=:lower} gives the $\ell$-node open rule (e.g. LG), while \texttt{principal=:upper} gives the $(\ell+1)$-node closed rule (e.g. LGL). If $K=2\ell+1$, then \texttt{principal=:lower} gives the $(\ell+1)$-node left half-open rule (e.g. left GR) and \texttt{principal=:upper} gives the $(\ell+1)$-node right half-open rule (e.g. right GR). The description below uses the right-endpoint continuation path (\texttt{add\_endpoint=:right}).  The left-endpoint path is its mirror image: the left endpoint is added with zero weight, the sweep direction is reversed, and the intermediate half-open rules contain $x_L$ instead of $x_R$.

\paragraph{Initialization}

If $K=2$ and the requested rule is the upper principal representation, the code
returns the unique closed two-point rule that solves the system
  \begin{equation} \label{eq:two_point_exact}
  x_1 = x_L, \qquad x_2 = x_R, \qquad \begin{bmatrix} f_1(x_L) & f_1(x_R) \\ f_2(x_L) & f_2(x_R) \end{bmatrix} \begin{bmatrix} \omega_1 \\ \omega_2 \end{bmatrix} = \begin{bmatrix} c_1 \\ c_2 \end{bmatrix} \, .
  \end{equation} 
Otherwise, the continuation starts from the one-point rule exact for
  $\left\{ f_1, f_2 \right\}$.  The following nonlinear scalar problem in $x_1$ is solved,
  \[
      c_1 f_2(x_1) - c_2 f_1(x_1) = 0 \, ,
      \qquad
      \omega_1 = \frac{c_1}{f_1(x_1)} \, ,
  \]
with initial guess $x_1 = (x_L + x_R)/2$. For differentiable bases, this is solved by safeguarded Newton iteration; if Newton fails to make a safe bracketed step, the code falls back to Brent's method.  For the derivative-free path, Brent's method is used directly.

\paragraph{A Full Continuation Cycle}

Assume that the algorithm has already computed an $r$-node lower principal rule (open rule) exact for
$\left\{ f_1,\ldots,f_{2r} \right\}$, which we denote as $\left( \vec{x}^{(2r)}, \vec{\omega}^{(2r)} \right)$. The next cycle consists of four stages.
\begin{description}
    \item[Step 1] Append the right endpoint $x_R$ with weight zero and trace the upper canonical family of $\left\{ f_1,\ldots,f_{2r} \right\}$. That is, we seek upper canonical representations using the nodal distribution $\left\{ x_1 = \xi, x_2(\xi) , \ldots , x_{r+1} = x_R \right\}$ and weights $\left\{ \omega_1(\xi), \ldots , \omega_{r+1}(\xi) \right\}$ parametrized by the continuation parameter $\xi \in \left( x_L,x_1^{(2r)} \right)$, where for example $\omega_i\left( x_1^{(2r)} \right) = \omega_i^{(2r)}$ for $i = 1 , \ldots r$, and $\omega_{r+1}\left( x_1^{(2r)} \right) = 0$. Sweep through the interval $\left( x_L,x_1^{(2r)} \right)$, using a step size of $\Delta x = \left( x_1^{(2r)} - x_L \right) / (n+1)$ with $n=8$. For each $\xi$, obtain the $r+1$ weights and the remaining $r-1$ free nodes of the canonical representation by solving the nonlinear system 
    \begin{equation} \label{eq:upper_canonical_family}
        \sum_{i=1}^{r+1}\omega_i(\xi) f_j(x_i(\xi)) = c_j \, ,
        \qquad j=1,\ldots,2r \, , \quad \text{where} \quad x_{1} = \xi \, , \quad x_{r+1} = x_R \, ,
    \end{equation}
    using the previous canonical representation as an initial guess. Note that on each solve there are a total of $2r$ equations and $2r$ unknowns. For each $\xi$, the next moment residual
    \[
        \Phi_{2r+1}(\xi)
        = \sum_{i=1}^{r+1}\omega_i(\xi) f_{2r+1}(x_i(\xi)) - c_{2r+1}
    \]
    is monitored as $\xi$ moves from $x_1^{(2r)}$ toward $x_L$. Under the CT-system hypotheses, $\Phi_{2r+1}(\xi)$ is monotone, so a sign change brackets the upper principal representation (right half-open rule) for
    $\left\{ f_1,\ldots,f_{2r+1} \right\}$.

    \item[Step 2] Obtain the upper principal representation for $\left\{ f_1,\ldots,f_{2r+1} \right\}$ (i.e.\,the right half-open rule with $r+1$ nodes) by solving the nonlinear system
    \[
        \sum_{i=1}^{r+1}\omega_i f_j(x_i)=c_j,
        \qquad j=1,\ldots,2r+1 \, , \quad \text{where} \quad x_{r+1} = x_R \, ,
    \]
    using the closest canonical representation from the previous step as an initial guess. Note that there are a total of $2r+1$ equations and $2r+1$ unknowns.

    \item[Step 3] Starting from this upper principal representation $\left( \vec{x}^{(2r+1)}, \vec{\omega}^{(2r+1)} \right)$, trace the lower canonical family of $\left\{ f_1,\ldots,f_{2r+1} \right\}$.  We use the nodal distribution $\left\{ x_1 = \xi, x_2(\xi) , \ldots , x_{r+1}(\xi) \right\}$ and weights $\left\{ \omega_1(\xi), \ldots , \omega_{r+1}(\xi) \right\}$ parametrized by the continuation parameter $\xi \in \left( x_L,x_1^{(2r+1)} \right)$, and for each $\xi$, solve an analogous nonlinear system to \eqref{eq:upper_canonical_family}. 
    Note that now only the leftmost node $x_1=\xi$ is fixed;
    the right endpoint $x_{r+1}$ is free. This leads to a system of $2r+1$ equations and $2r+1$ unknowns. For each $\xi$, the monitored residual is
    \[
        \Phi_{2r+2}(\xi)
        = \sum_{i=1}^{r+1}\omega_i(\xi) f_{2r+2}(x_i(\xi)) - c_{2r+2}.
    \]
    A sign change in the monotone $\Phi_{2r+2}(\xi)$ as $\xi$ moves from $x_1^{(2r+1)}$ toward $x_L$ brackets the lower principal representation (open rule) for $\left\{ f_1,\ldots,f_{2r+2} \right\}$.

    \item[Step 4] Obtain the lower principal representation for $\left\{ f_1,\ldots,f_{2r+2} \right\}$ (i.e.\,the open rule with $r+1$ nodes) by solving the nonlinear system
    \[
        \sum_{i=1}^{r+1}\omega_i f_j(x_i)=c_j \, ,
        \qquad j=1,\ldots,2r+2 \, ,
    \]
    using the closest canonical representation from the previous step as an initial guess. Note that there are a total of $2r+2$ equations and $2r+2$ unknowns. This then becomes the seed for the next cycle.
\end{description}

\paragraph{Termination}

The continuation loop stops at the appropriate principal representation. For even $K=2\ell$, selecting \texttt{principal=:lower} stops the loop after the $\ell$-node open rule (Step 4). For odd $K=2\ell+1$, the loop stops after the $\ell+1$-node right half-open rule (Step 2). For even $K=2\ell$ with \texttt{principal=:upper}, the continuation loop terminates after the $\ell$-node right half-open rule (Step 2), and uses it to seed a final `Lobatto' step. In the right-endpoint path, the left endpoint $x_L$ is prepended with weight zero. The upper canonical family of $\left\{ f_1,\ldots,f_{K-1} \right\}$ is then traced using the nodal distribution $\left\{ x_1 = x_L, x_2=\xi , x_3(\xi) , \ldots , x_{K/2+1} = x_R \right\}$ and weights $\left\{ \omega_1(\xi), \ldots , \omega_{K/2+1}(\xi) \right\}$ parametrized by the continuation parameter $\xi \in \left( x_1^{(K-1)}, x_2^{(K-1)} \right)$. For each $\xi$ in the sweep, the nonlinear system
\[
  \sum_{i=1}^{K/2+1}\omega_i(\xi) f_j(x_i(\xi)) = c_j \, ,
  \qquad j=1,\ldots,K-1 \, , \quad \text{where} \quad x_{1} = x_L \, , \quad x_2 = \xi \, , \quad x_{K/2+1} = x_R \, ,
\]
is solved  using the previous canonical representation as an initial guess. The monitored residual is
\[
  \Phi_{K}(\xi)
  = \sum_{i=1}^{K/2+1}\omega_i(\xi) f_{K}(x_i(\xi)) - c_{K} \,
\]
which is again monotone, so a sign change brackets the desired upper principal representation (closed rule) for $\left\{ f_1,\ldots,f_{K} \right\}$. The resulting bracket then seeds the final closed solve
\[
    \sum_{i=1}^{K/2+1}\omega_i f_j(x_i)=c_j \, ,
    \qquad j=1,\ldots,K \, , \quad
    x_1=x_L \, , \quad x_{K/2+1}=x_R \, .
\]

\paragraph{Nonlinear Solves}
When the basis functions $f_j$ are differentiable, the nonlinear solves use a vector residual of the form
\[
  r_j\left( \vec{x}^{(\mathrm{free})},\vec{\omega} \right)
  \coloneqq \sum_{i=1}^{m} \omega_i f_j(x_i) - c_j = 0 \, ,
  \qquad j=1,\ldots,m \, ,
\]
for $n$ nodes and $m$ basis functions. If $k$ nodes are fixed, we always have $2n - k = m$, so there is an equal number of equations and unknowns. For any $x_i$ that is a free node and $\omega_i$, the Jacobian entries are
\[
    \frac{\partial R_j}{\partial x_i}=\omega_i f'_j(x_i) \, , \qquad
    \frac{\partial R_j}{\partial \omega_i}=f_j(x_i) \, .
\]
Analytic first derivatives are used when available; if a set of callable derivatives $f_j'(x)$ are not provided but \texttt{differentiable=true} is set, then the missing first derivatives are approximated with a support-aware finite differences routine. We employ the quadratic trust region method via the package \texttt{NLsolve.jl}.

If \texttt{differentiable=false}, the nonlinear systems are instead solved using a mesh adaptive direct search (MADS) method via the package \texttt{NOMAD.jl}. In this case, we reduce the dimensionality of the problem to only solve for the free nodes $\vec{x}^{(\mathrm{free})}$ by employing a least-squares projection for the weights $\vec{\omega}$,
\[
    \vec{\omega}\left( \vec{x}^{(\mathrm{free})} \right)
    =
    \operatorname*{argmin}_{\vec{\eta}}
    \left\|\mat{V}(\vec{x})\vec{\eta}-\vec{c}\right\|_2 \, ,
    \qquad
    \mat{V}_{j i}(\vec{x}) = f_j(x_i) \, .
\]
Note that $\vec{x}^{(\mathrm{free})} \in \mathbb{R}^{n-k}$, $\vec{x}, \vec{\omega} \in \mathbb{R}^{n}$, and $\vec{c} \in \mathbb{R}^{m}$, so the above projection is overdetermined, meaning the moment constraints will not be satisfied exactly until the appropriate $\vec{x}^{(\mathrm{free})}$ are found. To find $\vec{x}^{(\mathrm{free})}$, the MADS optimizer minimizes the scalar residual
\[
    \left\|\mat{V}\left(\vec{x} \left(\vec{x}^{(\mathrm{free})} \right)\right) \vec{\omega}\left(\vec{x}^{(\mathrm{free})} \right) -\vec{c}\right\|_2^2 .
\]
The current implementation enforces support-bounded ordered
nodal distributions and hard bounds on $\vec{x}^{(\mathrm{free})}$ parameters whenever a canonical bracket is available.

If the nonlinear solver fails during a canonical sweep, the step size is reduced by half and the solver is restarted using the same initial guess, which is now closer to the exact solution. Similarly, if the nonlinear solver fails during a principal representation solve, the bracket for $\xi$ obtained from the previous canonical sweep is used to form a secant update for a closer $\xi$, from which we recompute the canonical representation, resulting in a closer initial guess for the principal representation solve. In practice, however, these fallback mechanisms are rarely needed as the continuation process already results in extremely accurate initial guesses.

For any nonlinear solve, residuals slightly above the strict solver tolerance can be accepted through the \texttt{lost\_digits} policy, which is useful for ill-conditioned bases. Since the various non-terminal solves are only used to obtain initial guesses for the next step in the continuation process, we may also use a looser \texttt{intermediate\_tolerance} on each of these solves to improve efficiency. The final returned quadrature rule, however, is still solved to the full strict solver tolerance. 

\paragraph{A Summary of Modifications}

The above algorithm includes a number of modifications and generalizations from the original implementation of Huybrechs~\cite{Huybrechs_computation_2022}. These include:
\begin{itemize}
  \item a generalized endpoint continuation via \texttt{add\_endpoint=:left}
  or \texttt{:right}. This makes the algorithm suitable for the generation of quadrature rules with either left or right endpoint singularities.
  \item support for quadrature rules with left half-open, right half-open, and closed nodal distributions;
  \item allowing moments to be passed explicitly;
  \item adaptive canonical sweeps with monotonicity checks on $\Phi$, resulting in significant speedup in the continuation process and guaranteed robustness;
  \item secant-based recovery after failed principal solves, resulting in guaranteed robustness in the continuation process;
  \item support-aware finite differences for missing first derivatives and a derivative-free MADS path for non-differentiable bases;
  \item the addition of automatic QR-based orthogonalization of ill-conditioned bases with triangular transformations that preserve the ET-system property;
  \item optional moment computation with a user-supplied measure,
  \item arbitrary precision support via \texttt{BigFloat}; 
  \item numerical T-system and ECT-system diagnostic checks based on generalized Vandermonde determinants and Wronskians (see Appendix \ref{app:chebyshev_tests});
  \item expanded documentation, driver scripts, and regression tests.
\end{itemize}

\section{Methods for Determining whether a Basis is a Chebyshev System} \label{app:chebyshev_tests}

The direct way to verify that a basis $\{ f_j \}_{j=1}^K$ is a T-system is to use Definition~\ref{defn:t-system}, i.e.\, to verify that for every ordered nodal distribution
$x_L \leq x_1 < \cdots < x_K \leq x_R$, the generalized Vandermonde determinant
\[
  \det \left[ f_j(x_i) \right]_{i,j=1}^K
\]
is positive. This is straightforward to turn into a numerical diagnostic: sample many ordered point sets, evaluate the determinant, and check whether the normalized determinant remains bounded away from zero with a constant sign. It is not, however, a practical proof, since the definition requires checking a continuum of possible point sets. The \texttt{check\_T\_system} diagnostic exported by \texttt{GeneralizedGauss.jl} performs this numerical test.

A stronger and often easier condition to verify is that the basis is an ECT-system (Definition~\ref{defn:et-system}). For $C^{K-1}$-continuous ordered basis, the Wronskians
\[
  W_k(x) \coloneqq
  \det \left[ f_j^{(i-1)}(x) \right]_{i,j=1}^k,
  \qquad k=1,\ldots,K .
\]
must have a constant nonzero sign on the interval. In practice this is usually much simpler to inspect, since one can plot the Wronskians as functions of $x$ rather than sample determinants over all ordered point sets. In some cases it may also be convenient to check the product space $\fnc{F}\fnc{F}$ directly rather than $(\fnc{F}\fnc{F})'$, as whenever the first function in $\fnc{F}$ is a constant, the first column of the Wronskian contains exactly one nonzero entry and zeros below it, so expanding along that column gives the Wronskian of the differentiated remaining functions, i.e.\,the corresponding $(\fnc{F}\fnc{F})'$ basis. The \texttt{check\_ECT\_system} diagnostic exported by \texttt{GeneralizedGauss.jl} implements this stronger test by evaluating the initial Wronskians at interior Chebyshev sample points, using analytic derivatives when available and otherwise falling back to Chebyshev interpolation/Taylor approximations. Both diagnostics are wrapped in \texttt{GaussFSBP.jl}.

For some bases, the Chebyshev-system property can be shown directly with little effort. There is no general recipe for such proofs, however. For the exponential-enriched space \eqref{eq:exp_enriched_space}, the quadrature basis is
\[
  (\fnc{F}_p\fnc{F}_p)' =
  \operatorname*{span}\left\{\xi^k,\ \xi^l e^{a\xi},\ e^{2a\xi}
  \mid k=0,\ldots,2p-1,\ l=0,\ldots,p \right\}.
\]
Assume $a \neq 0$ and order the basis as written. The initial polynomial blocks have constant nonzero Wronskians, as the derivative matrices are upper triangular with diagonal entries
\(0!,1!,\ldots,k!\). Let $\partial_\xi$ denote the differential operator $\mathrm{d}/\mathrm{d}\xi$. Then, the mixed initial segment
\[
  \left(1,\xi,\ldots,\xi^{2p-1},e^{a\xi},\xi e^{a\xi},\ldots,\xi^l e^{a\xi}\right)
\]
is a fundamental system for the constant-coefficient operator
$\partial_{\xi}^{2p}(\partial_{\xi}-a)^{l+1}$. This $(2p+l+1)$-order ODE has a characteristic polynomial $\lambda^{2p}(\lambda-a)^{l+1}$ with roots $\lambda=0$ of multiplicity $2p$, and $\lambda=a$ of multiplicity $l+1$. In monic form, this is
\[
\lambda^{2p+l+1} - a(l+1)\lambda^{2p+l} + \cdots = 0 \, ,
\]
so Abel's identity gives
$W(\xi)=C_l e^{(l+1)a\xi}$ for some $C_l\neq 0$, which is clearly of a constant nonzero sign. The full basis is similarly a fundamental system for
$\partial_\xi^{2p}(\partial_\xi-a)^{p+1}(\partial_\xi-2a)$, so we can proceed in the same manner to show that the Wronskian is
$C e^{(p+3)a\xi}$ with $C\neq 0$. We conclude that since all Wronskians have a constant nonzero sign, the exponential quadrature basis is an ECT-system on any finite interval for $\xi$.

For the shifted square-root space \eqref{eq:sqrt_enriched_space}, the quadrature basis \eqref{eq:sqrt_quadrature_basis} is
\[
  (\fnc{F}_{p,\ell}\fnc{F}_{p,\ell})' =
  \operatorname*{span}\left\{
  \xi^k,\ \frac{\xi^l}{\sqrt{\ell-\xi}}
  \mid k=0,\ldots,2p-1,\ l=0,\ldots,p \right\}.
\]
Let $t=\ell-\xi$. On any interval where $t>0$, the polynomial block spans
$\left\{1,t,\ldots,t^{2p-1}\right\}$ and the square-root block spans
$\left\{t^{-1/2},t^{1/2},\ldots,t^{p-1/2}\right\}$. Hence the same space is spanned by monomials $t^{\alpha_j}$ with exponents
\[
  \left\{\alpha_j\right\}_{j=1}^{3p+1}
  =
  \left\{-\tfrac{1}{2},\tfrac{1}{2},\ldots,p-\tfrac{1}{2}\right\}
  \cup
  \left\{0,1,\ldots,2p-1\right\},
\]
ordered increasingly.
Through direct computation, the Wronskian of any initial 
$\left\{t^{\alpha_j}\right\}_{j=1}^k$ is
\[
\begin{aligned}
  W_k(t)
  &= \det \left[ \left( t^{\alpha_j} \right)^{(i-1)} \right]_{i,j=1}^k \\
  &= \det\left[(\alpha_j)_{i-1} t^{\alpha_j-(i-1)}\right]_{i,j=1}^{k} \\
  &=
  \left(\prod_{j=1}^{k} t^{\alpha_j}\right)
  \left(\prod_{i=1}^{k} t^{-(i-1)}\right)
  \det\left[(\alpha_j)_{i-1}\right]_{i,j=1}^{k} \, ,
\end{aligned}
\]
where $(\alpha)_m \coloneqq \alpha(\alpha-1)\cdots(\alpha-m+1)$ is the falling factorial. For each $m$, the falling factorial $(\alpha)_m$ is a monic polynomial in $\alpha$ of degree $m$, so the row vector of falling factorials
\[
  \left(1,\alpha,\alpha(\alpha-1),\ldots,(\alpha)_{k-1}\right)^\top
\]
can be obtained from the monomial vector
\[
  \left(1,\alpha,\alpha^2,\ldots,\alpha^{k-1}\right)^\top
\]
by multiplication with a lower triangular matrix whose diagonal entries are
all one. This change of basis has determinant one, so
\[
  \det\left[(\alpha_j)_{i-1}\right]_{i,j=1}^{k}
  =
  \det\left[\alpha_j^{i-1}\right]_{i,j=1}^{k}
  =
  \prod_{1\leq i<j\leq k}(\alpha_j-\alpha_i)  \neq 0\, ,
\]
where the right-hand side is the well-known Vandermonde determinant of monomials. This is nonzero since each exponent $\alpha_j$ is distinct. Therefore, the full Wronskian of $\left\{t^{\alpha_j}\right\}_{j=1}^k$ is
\[
  W_k(t) =
  \left(\prod_{1\leq i<j\leq k}(\alpha_j-\alpha_i)\right)
  t^{\sum_{j=1}^{k}\alpha_j-k(k-1)/2} \, ,
\]
which is nonzero for $t>0$. Thus the monomial-ordered basis is an ECT-system. The Wronskian
with respect to $\xi$ differs only by the fixed sign
$(-1)^{k(k-1)/2}$, since $\mathrm{d}/\mathrm{d}\xi=-\mathrm{d}/\mathrm{d}t$, and since the original shifted square-root quadrature basis and the monomial basis are related by an invertible linear change of basis, their full generalized Vandermonde determinants differ only by a nonzero constant factor. Therefore the shifted square-root quadrature basis is a T-system. In the case $\ell=1$, this means the result applies on $[0,1)$, with the singular endpoint handled by open or half-open quadrature rules.

\bibliographystyle{spmpsci}      
\bibliography{references.bib}

\end{document}